\def\sinc{\mathrm{sinc}}
\def\pdt2{\partial_t^2}
\def\pdx2{\partial_x^2}
\newcommand{\norm}[1]{\left\Vert#1\right\Vert}
\newcommand{\normmm}[1]{{\left\vert\kern-0.25ex\left\vert\kern-0.25ex\left\vert #1
    \right\vert\kern-0.25ex\right\vert\kern-0.25ex\right\vert}}
\newcommand{\abs}[1]{\left\vert#1\right\vert}
\newtheorem{theo}{Theorem}[section]
\newtheorem{rem}[theo]{Remark}
\newtheorem{defi}[theo]{Definition}
\newtheorem{assum}[theo]{Assumption}
\newtheorem{prop}[theo]{Proposition}
\def\no{\noindent}
\title{Long-time oscillatory energy conservation of total energy-preserving methods for highly oscillatory\\ Hamiltonian systems
\thanks{The first author was supported in part by the Alexander von Humboldt Foundation and by
the Natural Science Foundation of Shandong Province (Outstanding
Youth Foundation) under Grant ZR2017JL003. The second author was
supported in part by the National Natural Science Foundation of
China under Grant 11671200.}}
\author{Bin Wang\,\footnote{School of Mathematical Sciences, Qufu Normal
University, Qufu  273165,  P.R.China; Mathematisches Institut,
University of T\"{u}bingen, Auf der Morgenstelle 10, 72076
T\"{u}bingen, Germany. E-mail:~{\tt wang@na.uni-tuebingen.de}} \and
Xinyuan Wu\thanks{School of Mathematical Sciences, Qufu Normal
University, Qufu  273165,  P.R.China; Department of Mathematics,
Nanjing University,  Nanjing 210093, P.R. China. E-mail:~{\tt
xywu@nju.edu.cn}} }
\begin{document}
\maketitle

\begin{abstract}
For an integrator when applied to a highly oscillatory system, the
near conservation of the oscillatory energy over long times is an
important aspect. In this paper, we study the long-time near
conservation of oscillatory energy for the adopted average vector
field (AAVF) method when applied to highly oscillatory Hamiltonian
systems. This AAVF method is an extension of the average vector
field method  and preserves the total energy of highly oscillatory
Hamiltonian systems exactly. This paper is devoted to analysing
another important property of AAVF method,  i.e., the near
conservation of its oscillatory energy in a long term. The long-time
oscillatory energy conservation is obtained via constructing a
modulated Fourier expansion of the AAVF method and deriving an
almost invariant of the expansion. A similar result of the method in
the multi-frequency case is also presented in this paper.
\medskip

\no{Keywords:} highly oscillatory Hamiltonian systems,  modulated
Fourier expansion,  AAVF  method, energy-preserving methods,
long-time oscillatory energy conservation.

\medskip
\no{MSC (2000):} 65P10, 65L05

\end{abstract}

\section{Introduction}\label{intro}

This paper is concerned with the long-time oscillatory energy
behaviour  of energy-preserving methods   for the highly oscillatory
Hamiltonian system
\begin{equation}
 \left\{\begin{aligned}
 &\dot{q}=\nabla_p H(q,p),\qquad \ \ q(0)=q_{0},\\
 &\dot{p}=-\nabla_q H(q,p),\qquad  p(0)=p_{0}, \end{aligned}\right.
\label{H-s}%
\end{equation}
where the Hamiltonian function is given by
\begin{equation}H(q,p)=\frac{1}{2}\big(\norm{p}^2+\norm{\Omega q}^2\big)+U(q).\label{H}%
\end{equation}
Here $U(q)$ is a real-valued function. According to the partition of
the square matrix
$$\Omega= \left(
                          \begin{array}{cc}
                            0_{d_1\times d_1}  & 0 _{d_2\times d_2}  \\
                            0_{d_1\times d_1} & \omega I_{d_2\times d_2} \\
                          \end{array}
                        \right)
$$ with a large positive parameter $\omega$,   the vectors $p = (p_1,
p_2) \in \mathbb{R}^{d_1}\times \mathbb{R}^{d_2}$ and $q = (q_1,
q_2)\in \mathbb{R}^{d_1}\times \mathbb{R}^{d_2}$ are partitioned
accordingly.  As is known, the oscillatory energy of the system
\eqref{H-s} is
\begin{equation}
I(q,p)=\frac{1}{2}p_2^\intercal p_2 +\frac{1}{2}\omega^2q_2^\intercal q_2 \label{oscillatory energy}%
\end{equation}
and it  is nearly conserved over long times along the solution of
\eqref{H-s} (see \cite{hairer2006}).  Our attention of this paper
will particularly focus on the near conservation of the oscillatory
energy \eqref{oscillatory energy} for energy-preserving methods over
long-time intervals.

In order to preserve the  total energy  of Hamiltonian systems
exactly by numerical methods, energy-preserving (EP) methods have
been proposed and researched. In the recent decades, various kinds
of EP methods have been derived, such as the average vector field
(AVF) method \cite{Celledoni2010,Celledoni14,Quispel08}, discrete
gradient  methods \cite{McLachlan14,McLachlan99},  the
energy-preserving collocation methods \cite{Cohen-2011,Hairer2010},
Hamiltonian Boundary Value Methods
\cite{Brugnano2010,Brugnano2012b}, energy-preserving
exponentially-fitted   methods \cite{Miyatake2014,Miyatake2015}, and
time finite elements  methods
\cite{Betsch00,Li_Wu(na2016),wang2018-JCP}. By taking advantage the
frequency matrix of  second-order highly oscillatory systems, a
novel adopted AVF (AAVF) method has been formulated and studied in
\cite{wang2012-PLA,wu2013-JCP} for the highly oscillatory
Hamiltonian system \eqref{H}.  It has been proved in
\cite{wang2012-PLA,wu2013-JCP} that this AAVF method exactly
preserves the  total energy  \eqref{H} and it  reduces to the AVF
method when the frequency matrix vanishes. However, most existing
publications dealing with EP methods focus on the formulation of the
methods and the analysis of the EP property. It seems that the
long-time behaviour of AAVF method concerning other
structure-preserving aspects has never been studied in the
literature, such as the long-time numerical conservation of
oscillatory energies. \emph{As is well known that an important
property of highly oscillatory systems is the near conservation of
the oscillatory energy over long times}.

On the other hand, in the recent two decades, modulated Fourier
expansion has been presented and developed   as an important
mathematical tool in the study of the long-time behaviour for
numerical methods/differential equations (see, e.g.
\cite{Cohen15,Gauckler13,Hairer16}). It was firstly given  in
\cite{Hairer00} and has been used in the long-time analysis for
various numerical methods, such as  for the St\"{o}rmer--Verlet
method in \cite{Hairer00-1},  for trigonometric integrators in
\cite{Cohen15,hairer2006}, for an implicit-explicit method in
\cite{McLachlan14siam,Stern09}, for heterogeneous multiscale methods
in \cite{Sanz-Serna09} and for splitting methods in
\cite{Gauckler17,Gauckler10-1}. However, it is noted that, until
now,   the technique of modulated Fourier expansions has not been
well applied to the long-term analysis for any energy-preserving
method in the literature.

Based on the facts stated above,  the  main contribution of this
paper is to analyse the long-time oscillatory energy conservation
for the AAVF method. To this end, the technique of modulated Fourier
expansions with some adaptations will be used in the analysis. To
our knowledge, this paper is the first one that rigorously studies
the remarkable long-time oscillatory energy conservation   of EP
methods on highly oscillatory Hamiltonian systems by using modulated
Fourier expansions.

The rest of this paper is organised as follows. We first present the
scheme of AAVF method and carry out an illustrative numerical
experiment in Section \ref{sec:Formulation}. Section \ref{sec:mfe of
the methods} derives the modulated Fourier expansion of the AAVF
method and analyse the bounds of the modulated Fourier functions. In
Section \ref{sec:oscillatory energy conservation}, we show an almost
invariant of the modulation system and then the main result
concerning the long-time  oscillatory energy conservation of AAVF
method is derived. Section \ref{sec: multi-frequency case} extends
the analysis  to multi-frequency case and studies the long-time
 conservation of AAVF method when applied to
multi-frequency highly oscillatory Hamiltonian systems. The last
section includes the concluding remarks of this paper.


\section{The AAVF method and illustrative numerical experiments}\label{sec:Formulation}
The highly oscillatory Hamiltonian system \eqref{H} can be rewritten
as a system of  second-order differential equations
\begin{equation}
q^{\prime\prime}(t)+\Omega^2q(t)=f(q(t)),  \qquad
q(0)=q_0,\ \ q'(0)=p_0,\label{prob}%
\end{equation}
where $f$ is the negative gradient of the  real-valued function
$U(q)$. For effectively integrating this second-order highly
oscillatory system, a novel kind of EP methods was  derived in
\cite{wang2012-PLA,wu2013-JCP}.

\begin{defi}
\label{defAAVF}  (See \cite{wang2012-PLA,wu2013-JCP}) The   adapted AVF (AAVF)      method for solving \eqref{prob} is defined by%
\begin{equation}\left\{
\begin{aligned}
 q_{n+1}&=\phi_0(V)q_n+h\phi_1(V)p_n+h^2\phi_2(V)\displaystyle\int_{0}^{1}f((1-\tau)q_{n}+\tau q_{n+1})d\tau,\\
p_{n+1}&=-h\Omega^2\phi_1(V)q_n+\phi_0(V)p_n+h\phi_1(V)\displaystyle\int_{0}^{1}f((1-\tau)q_{n}+\tau
q_{n+1})d\tau,
\end{aligned}\right.\label{AAVFmethod}%
\end{equation}
where $h$ is the stepsize, and $\phi_0, \phi_1$ and $\phi_2$ are
matrix-valued functions of $V=h^{2}\Omega^2$ defined by
\begin{equation}
\phi_{l}(V):=\sum\limits_{k=0}^{\infty}\dfrac{(-1)^{k}V^{k}}{(2k+l)!},\
\ l=0,1,2. \label{Phi01}
\end{equation}
\end{defi}

 It is noted that
in terms of   this definition, we have
\begin{equation*}
\phi_{0}(V)=\cos(h\Omega),\qquad  \phi_{1}(V) = \sin(h\Omega) (h\Omega)^{-1},\qquad  \phi_{2}(V) = (I-\cos(h\Omega)) (h\Omega)^{-2}.
\end{equation*}
It  can be observed that this method \eqref{AAVFmethod}  reduces to
the AVF method when $\Omega=0$. It also has been shown in
\cite{wang2012-PLA,wu2013-JCP} that this method is symmetric and
exactly preserves the total energy \eqref{H}. In this paper, we pay
attention to its long-time numerical behaviour in oscillatory energy
preservation and prove the result by modulated Fourier expansions.

 As an illustrative numerical
example, we apply this method  to the   the Fermi--Pasta--Ulam
problem, which can be expressed by a Hamiltonian system with the
Hamiltonian
 \begin{equation*}
\begin{array}
[c]{ll}%
H(y,x) &
=\dfrac{1}{2}\textstyle\sum\limits_{i=1}^{2m}y_{i}^{2}+\dfrac
{\omega^{2}}{2}\textstyle\sum\limits_{i=1}^{m}x_{m+i}^{2}+\dfrac{1}{4}%
[(x_{1}-x_{m+1})^{4}\\
& +\textstyle\sum\limits_{i=1}^{m-1}(x_{i+1}-x_{m+i-1}-x_{i}-x_{m+i}%
)^{4}+(x_{m}+x_{2m})^{4}].
\end{array}
\end{equation*}
For the AAVF formula \eqref{AAVFmethod}, we consider    applying
 midpoint rule, Simpson's rule and four-point Gauss-Legendre's rule   to the integral
and denote the corresponding methods by AAVF1, AAVF2 and AAVF3,
respectively. Following \cite{hairer2006}, we choose $m=3$ and
\begin{equation*}\label{initial date} \ x_{1}(0)=1,\ y_{1}(0)=1,\
x_{4}(0)=\dfrac{1}{\omega},\ y_{4}(0)=1
 \end{equation*}
with zero for the remaining initial values.  The system is
integrated in the interval $[0,1000]$ with   $h=0.02,0.01$ and
$\omega=200$. We remark that the values of $h\omega$ are $4$ and
$2$. The errors of the oscillatory energy $I$ against $t$ for
different  methods are shown in Figs. \ref{p0}-\ref{p2}.  From the
results, it can be observed a fact that   these three methods
approximately conserve  the oscillatory  energy $I$ very well over a
long term. Moreover, it seems that no matter  which quadrature is
used, there is no difference in the oscillatory energy conservation.
All the phenomena will be explained theoretically in the rest of
this paper.

 \begin{figure}[ptb]
\centering
\includegraphics[width=6cm,height=3cm]{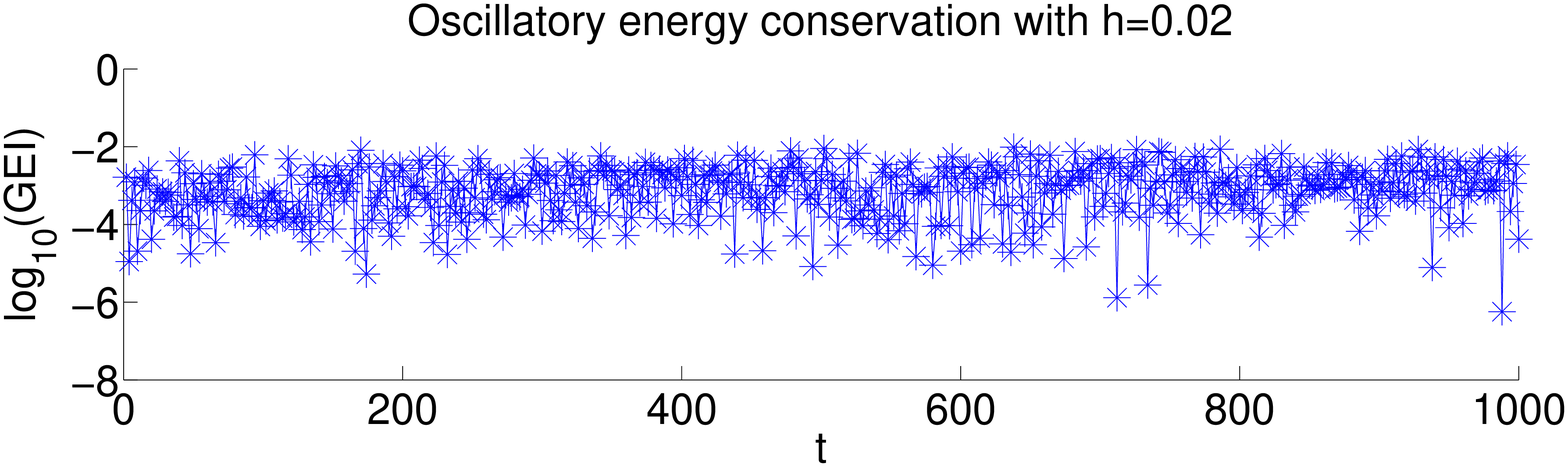}
\includegraphics[width=6cm,height=3cm]{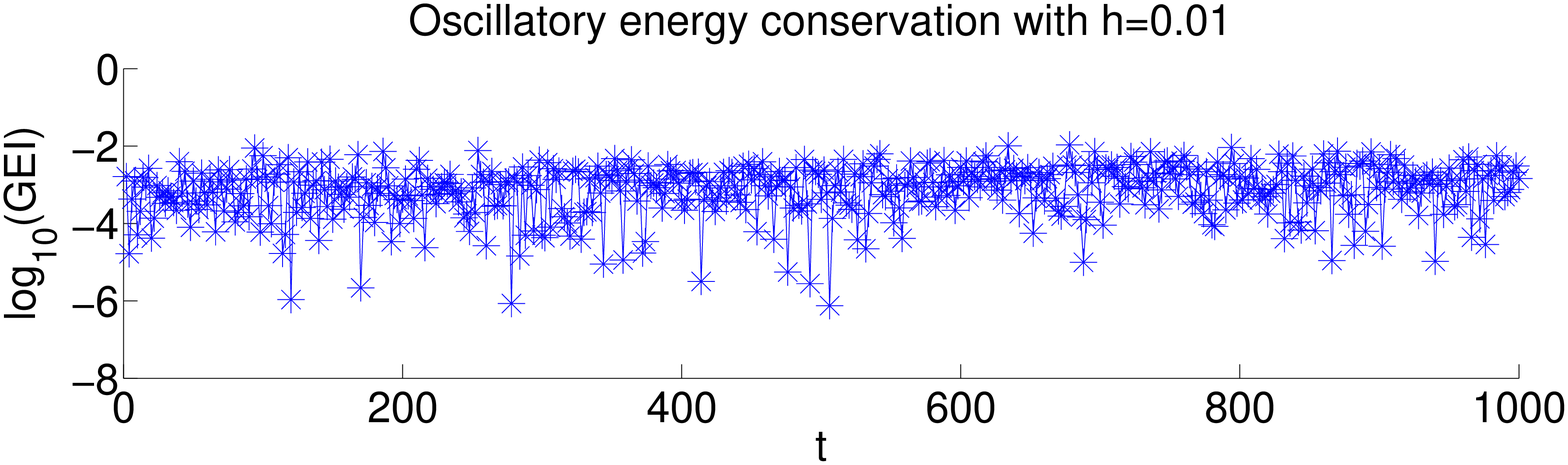}
\caption{AAVF 1: the logarithm of the oscillatory energy errors
against $t$.} \label{p0}
\end{figure}

 \begin{figure}[ptb]
\centering
\includegraphics[width=6cm,height=3cm]{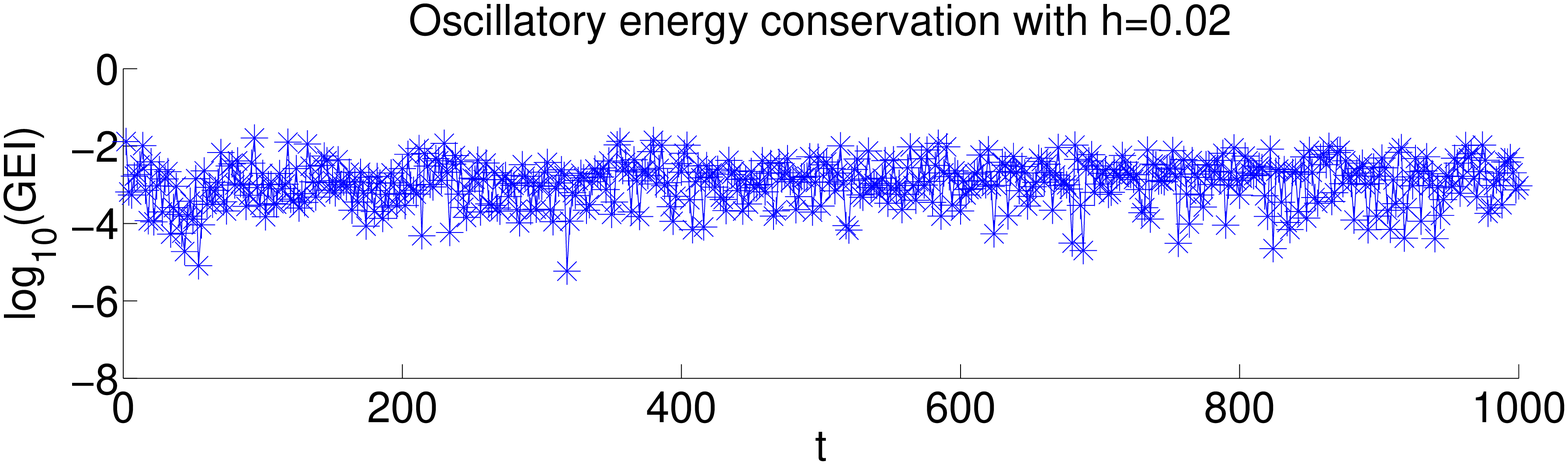}
\includegraphics[width=6cm,height=3cm]{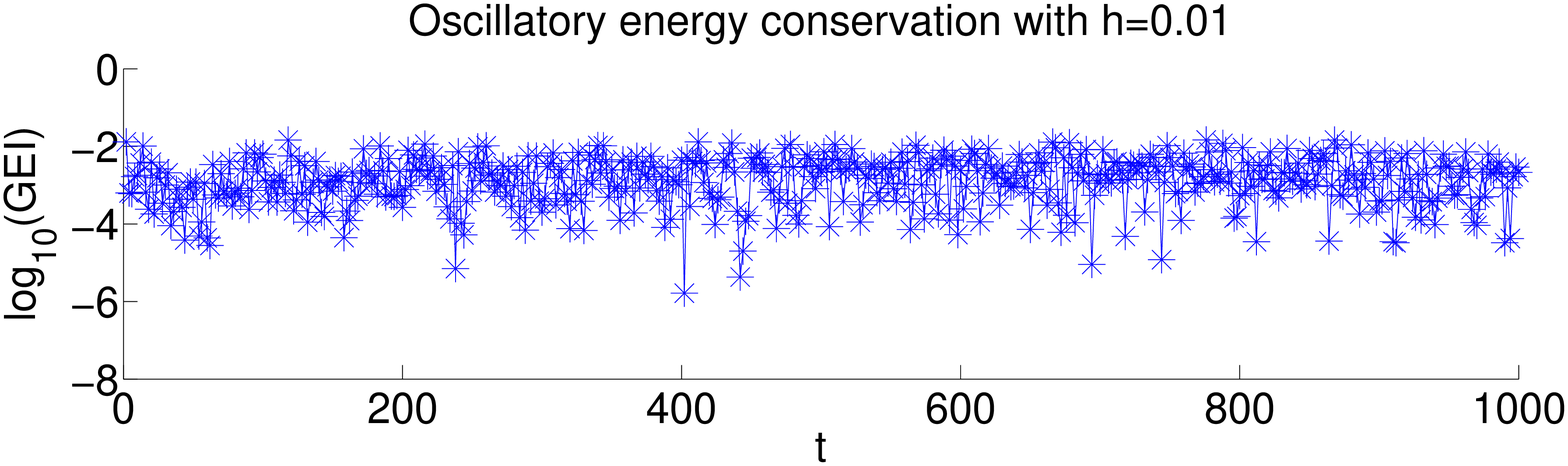}
\caption{AAVF 2: the logarithm of the oscillatory energy errors
against $t$.} \label{p1}
\end{figure}

 \begin{figure}[ptb]
\centering
\includegraphics[width=6cm,height=3cm]{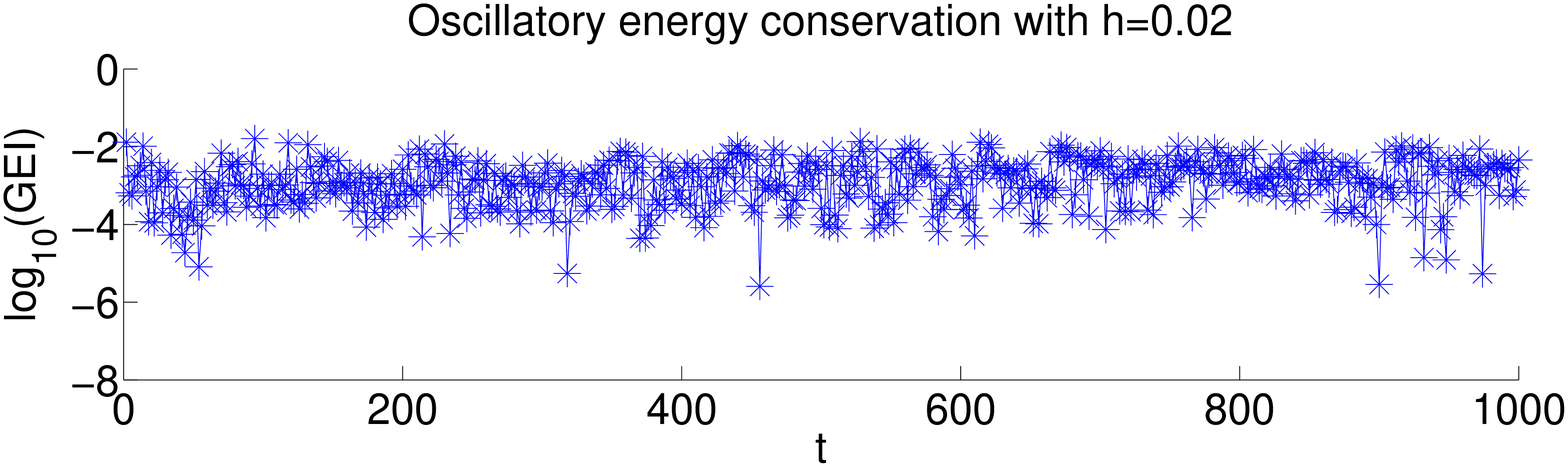}
\includegraphics[width=6cm,height=3cm]{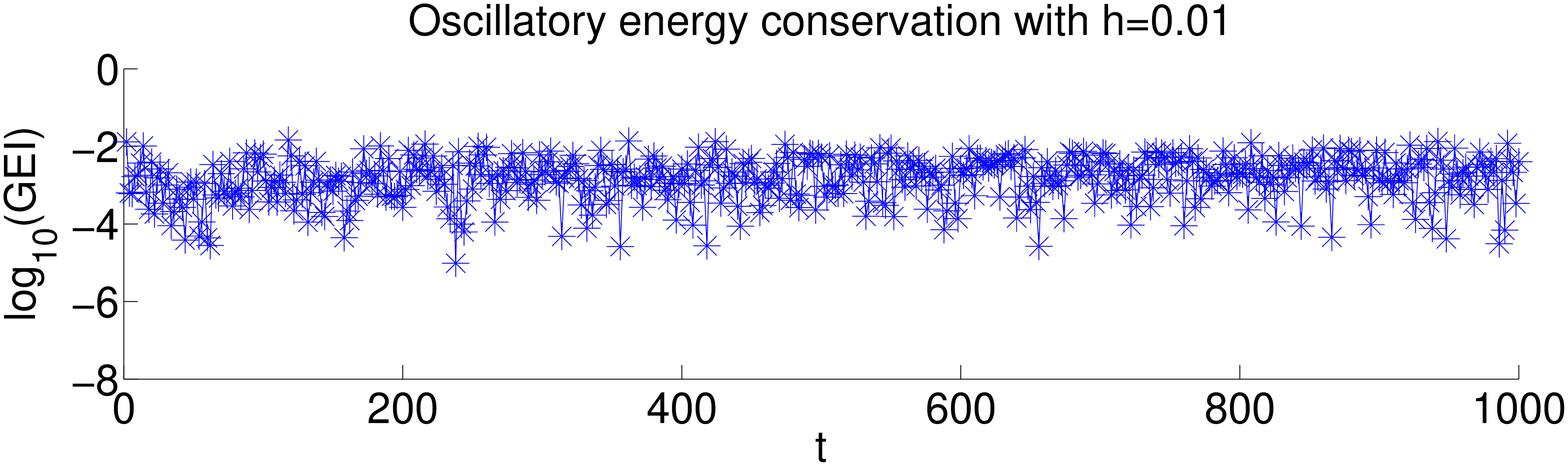}
\caption{AAVF 3: the logarithm of the oscillatory energy errors
against $t$.} \label{p2}
\end{figure}

\section{Modulated Fourier expansion} \label{sec:mfe of the methods}
In this section, we derive a modulated Fourier expansion of the AAVF
method.  The following assumptions are needed in our analysis.

\begin{assum}\label{ass}
\begin{itemize}

\item  The
initial values of \eqref{H-s} are assumed to satisfy
\begin{equation}\label{energy bound}\frac{1}{2}
\norm{p(0)}^2+\frac{1}{2} \norm{\Omega q(0)}^2\leq E\end{equation}
 with a   constant $E$  independent of $\omega$.

\item The numerical solution of the AAVF method is assumed to stay in a compact set.

\item The  stepsize  is  required to have a lower bound  such that
$ h\omega \geq c_0 > 0. $
\item We assume that the numerical non-resonance condition is true
\begin{equation}
\abs{\sin\big(\frac{1}{2}kh\omega\big)} \geq c \sqrt{h}\ \ \mathrm{for} \ \ k=1,2,\ldots,N\ \   \mathrm{with} \ \ N\geq2.\label{numerical non-resonance cond}%
\end{equation}
\end{itemize}
\end{assum}

These assumptions have been considered many times in the long-term
analysis of other methods without EP property and we refer to
\cite{Cohen05,Hairer00,hairer2006} for example.

  In this paper, we   define   five
operators   by
\begin{equation}\label{LLL}
\begin{aligned}L_1(hD):&=\mathrm{e}^{hD}-2\cos(h\Omega)+\mathrm{e}^{-hD},\\
L_2(hD):&=\mathrm{e}^{\frac{1}{2}hD}+\mathrm{e}^{-\frac{1}{2}hD},\\
L_3(hD):&=(\mathrm{e}^{hD}-1)(\mathrm{e}^{hD}+1)^{-1},\\
L_4(hD,\tau,k):&= (1-\tau)\mathrm{e}^{-\mathrm{i} \frac{h}{2}k\omega
}\mathrm{e}^{-
 \frac{h}{2}D} +\tau\mathrm{e}^{\mathrm{i} \frac{h}{2}k\omega} \mathrm{e}^{
 \frac{h}{2}D},\\
 L(hD):&=(L_2^{-1} L_1)(hD),\end{aligned}
\end{equation} where $D$ is the
differential operator. The  following properties of these operators
will be used in our analysis.
\begin{prop}\label{lhd pro}
 The Taylor expansions of $  L(hD)$   are given by
\begin{equation*}
\begin{aligned}&L(hD)= \left(
                           \begin{array}{cc}
                             0 & 0 \\
                             0 & 1-\cos(h\omega) \\
                           \end{array}
                         \right)-\frac{1}{8}\left(
                           \begin{array}{cc}
                             4 & 0 \\
                             0 & 3+\cos(h\omega) \\
                           \end{array}
                         \right)
(\textmd{i} hD)^2+\cdots,\\
 &L(hD+\mathrm{i}h\omega)=\left(
                           \begin{array}{cc}
                             -4\csc(h\omega) \sin^3(\frac{h\omega}{2}) & 0 \\
                             0 & 0\\
                           \end{array}
                         \right)\\
                         &+\left(
                           \begin{array}{cc}
                             \frac{3+\cos(h\omega)}{2} \sec(\frac{h\omega}{2})\tan(\frac{h\omega}{2})& 0 \\
                             0 & 2\sin(\frac{h\omega}{2}) \\
                           \end{array}
                         \right)
(\textmd{i} hD)+\cdots,\\
&L(hD+\mathrm{i}kh\omega)=\left(
                           \begin{array}{cc}
                             -4\csc(kh\omega) \sin^3(\frac{kh\omega}{2}) & 0 \\
                             0 & (\cos(kh\omega)-\cos(h\omega))\sec(\frac{kh\omega}{2})\\
                           \end{array}
                         \right)\\
                         &+\left(
                           \begin{array}{cc}
                             \frac{3+\cos(kh\omega)}{2} \sec(\frac{kh\omega}{2})\tan(\frac{kh\omega}{2})& 0 \\
                             0 & \frac{2+\cos(kh\omega)+\cos(h\omega)}{2} \sec(\frac{kh\omega}{2}) \tan(\frac{kh\omega}{2}) \\
                           \end{array}
                         \right)
(\textmd{i} hD)+\cdots
\end{aligned}
\end{equation*}
for $\abs{k}>1$. The operator $L_3(hD)$ can be   expressed in its
Taylor expansions as follows:
\begin{equation*}
\begin{aligned}&L_3(hD)= \frac{1}{2}(hD)-\frac{1}{24}(hD)^3+\cdots,\\
&L_3(hD+\mathrm{i}kh\omega)=\tan(\frac{kh\omega}{2})\textmd{i}
+\frac{1}{1+\cos(kh\omega)}(hD)+\cdots
\end{aligned}
\end{equation*}
for $\abs{k}>0$. Moreover, for the operator $L_4(hD,\tau,k)$ with
$\abs{k}>0$, the following result holds
\begin{equation*}
\begin{aligned}
L_4(hD,
\frac{1}{2},k)=\cos(\frac{kh\omega}{2})+\frac{1}{2}\sin(\frac{kh\omega}{2})(\textmd{i}
hD)+\cdots.
\end{aligned}
\end{equation*}

\end{prop}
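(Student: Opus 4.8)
\noindent\emph{Proof idea.}\quad The plan is to observe that every operator in \eqref{LLL} is an elementary function of the single symbol $hD$, so that the formulas in the proposition are merely Taylor expansions of explicit hyperbolic/trigonometric functions. Using $\mathrm{e}^{z}+\mathrm{e}^{-z}=2\cosh z$ and writing $\cos(h\Omega)=\mathrm{diag}(I_{d_1},\cos(h\omega)I_{d_2})$, I would first record
\begin{equation*}
L_1(hD)=2\cosh(hD)-2\cos(h\Omega),\qquad L_2(hD)=2\cosh\big(\tfrac12 hD\big),\qquad L(hD)=\cosh\big(\tfrac12 hD\big)^{-1}\big(\cosh(hD)-\cos(h\Omega)\big),
\end{equation*}
together with $L_3(hD)=(\mathrm{e}^{hD}-1)(\mathrm{e}^{hD}+1)^{-1}=\tanh\big(\tfrac12 hD\big)$ and $L_4(hD,\tfrac12,k)=\cosh\big(\tfrac12 hD+\tfrac{\mathrm{i}}{2}kh\omega\big)$. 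All inverses make sense as formal power series in $hD$, since the symbols of $L_2$ and of $\mathrm{e}^{hD}+1$ equal $2\ne0$ at $hD=0$; after the shift $hD\mapsto hD+\mathrm{i}kh\omega$ the same holds whenever $\cos(\tfrac12 kh\omega)\ne0$ and $1+\cos(kh\omega)\ne0$, which is precisely the regime in which these expansions are used in later sections.

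For the shifted expansions I would substitute $z=w+\mathrm{i}\theta$ with $w=hD$ and $\theta\in\{kh\omega,\tfrac12 kh\omega\}$, apply the addition formulas $\cosh(w+\mathrm{i}\theta)=\cos\theta\cosh w+\mathrm{i}\sin\theta\sinh w$ and $\sinh(w+\mathrm{i}\theta)=\cos\theta\sinh w+\mathrm{i}\sin\theta\cosh w$ (treating $\mathrm{e}^{\mathrm{i}\theta}$ as a scalar parameter), and then expand in the formal variable $w=hD$ via $\cosh w=1+\tfrac12 w^2+\cdots$, $\sinh w=w+\cdots$, $\cosh(\tfrac12 w)^{-1}=1-\tfrac18 w^2+\cdots$; for $L_3$ I would instead expand $\tanh(\tfrac12 w+\tfrac{\mathrm{i}}{2}\theta)=\big(\tanh(\tfrac12 w)+\mathrm{i}\tan(\tfrac12\theta)\big)\big(1+\mathrm{i}\tanh(\tfrac12 w)\tan(\tfrac12\theta)\big)^{-1}$. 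Collecting the constant and linear coefficients and rewriting them with the half-angle identities $\cos\theta-1=-2\sin^2(\tfrac\theta2)$, $\sin\theta=2\sin(\tfrac\theta2)\cos(\tfrac\theta2)$, $1+\cos\theta=2\cos^2(\tfrac\theta2)$ and $2\cos^2(\tfrac\theta2)+\sin^2(\tfrac\theta2)=\tfrac12(3+\cos\theta)$ (plus the analogous combination producing $2+\cos(kh\omega)+\cos(h\omega)$ in the oscillatory block) reproduces exactly the stated coefficients; for example, multiplying $\cos(kh\omega)-1=-2\sin^2(\tfrac{kh\omega}{2})$ by $\cos(\tfrac{kh\omega}{2})^{-1}=2\csc(kh\omega)\sin(\tfrac{kh\omega}{2})$ yields the leading coefficient $-4\csc(kh\omega)\sin^3(\tfrac{kh\omega}{2})$ in the first block.

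I expect no deep obstacle; the only point needing care is the block structure of $\Omega$, which makes the behaviour of $L$ genuinely $k$-dependent in the second (oscillatory) block. There the relevant quantity is $\cosh(\mathrm{i}kh\omega)-\cos(h\omega)=\cos(kh\omega)-\cos(h\omega)$: it equals $1-\cos(h\omega)$ for $k=0$, it vanishes for $\abs{k}=1$ (a resonance, so that the expansion of $L(hD+\mathrm{i}h\omega)$ there starts at order $hD$), and it equals $\cos(kh\omega)-\cos(h\omega)$ for $\abs{k}\ge2$ — which is exactly why the proposition lists three separate formulas for $L$. The first block is insensitive to $k$ because $\cos(h\Omega)=I$ on it. Once these cases are separated, the claimed Taylor expansions follow from the elementary expansions above, so the remaining work is only the routine trigonometric bookkeeping. \QED
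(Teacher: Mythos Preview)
Your proposal is correct. The paper states this proposition without proof, treating it as a direct computation; your sketch supplies precisely that computation, via the identifications $L_1(hD)=2\cosh(hD)-2\cos(h\Omega)$, $L_2(hD)=2\cosh(\tfrac12 hD)$, $L_3(hD)=\tanh(\tfrac12 hD)$, $L_4(hD,\tfrac12,k)=\cosh(\tfrac12 hD+\tfrac{\mathrm{i}}{2}kh\omega)$, followed by the addition formulas and half-angle identities, and your case analysis (the vanishing of $\cos(kh\omega)-\cos(h\omega)$ in the oscillatory block for $|k|=1$) correctly explains why three separate formulas are listed for $L$.
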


\begin{theo}\label{energy thm}
Suppose that the conditions given in Assumption \ref{ass} are true.
 The numerical solution of the AAVF
method \eqref{AAVFmethod} admits the following modulated Fourier
expansion for $0 \leq t=nh \leq T$:
\begin{equation}
\begin{aligned} &q_{n}=  \sum\limits_{|k|<N} \mathrm{e}^{\mathrm{i}k\omega t}\zeta_h^k(t)+R_{h,N}(t),\ \
 p_{n}= \sum\limits_{|k|<N} \mathrm{e}^{\mathrm{i}k\omega t}\eta_h^k(t)+S_{h,N}(t),\\
\end{aligned}
\label{MFE-ERKN}%
\end{equation}
where $N$ is a fixed integer determined by \eqref{numerical
non-resonance cond} and the remainder terms are bounded by
\begin{equation}
 R_{h,N}(t)=\mathcal{O}(h^{N}),\ \ \ \  S_{h,N}(t)=\mathcal{O}(h^{N-1}).\\
\label{remainder}%
\end{equation}
The coefficient functions  $ \zeta_h^k, \eta_h^k$ as well as all
their derivatives are bounded by
\begin{equation}
\begin{aligned} &\zeta^0_{h,1}=\mathcal{O}(1),\qquad \quad    \eta^0_{h,1}=\mathcal{O}(1),\ \  \qquad
\zeta^0_{h,2}=\mathcal{O}(h^2),\quad \ \ \ \ \ \eta^0_{h,2}=\mathcal{O}(h^{ \frac{3}{2}}),  \\
&\zeta^1_{h,1}=\mathcal{O}(h^2),\  \ \ \ \ \ \
\eta^1_{h,1}=\mathcal{O}(h),\quad \ \ \ \ \
 \zeta^1_{h,2}=\mathcal{O}(h),\quad\ \ \ \    \ \ \eta^1_{h,2}=\textmd{i} \omega\zeta^1_{h,2}+\mathcal{O}(h),  \\
&\zeta^k_{h,1}=\mathcal{O}(h^{k+1}),\quad \
\eta^k_{h,1}=\mathcal{O}(h^{k}),\quad \ \ \ \
\zeta^k_{h,2}=\mathcal{O}(h^{k+1}),\quad \ \
\eta^k_{h,2}=\mathcal{O}(h^k)   \\
\end{aligned}
\label{coefficient func}%
\end{equation}
for $k=2,\ldots,N-1$. Moreover, we have
$\zeta^{-k}=\overline{\zeta^{k}}$ and
$\eta^{-k}=\overline{\eta^{k}}$. The constants symbolised by the
notation depend on the constants  from Assumption \ref{ass} and the
final time $T$,  but are independent of $h$ and $\omega$.
\end{theo}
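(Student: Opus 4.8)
The plan is to adapt the construction of modulated Fourier expansions from \cite{Hairer00,hairer2006} to the two-stage structure of \eqref{AAVFmethod}. \emph{Reduction to a scalar recursion.} First I would eliminate $p_n$. Using $\phi_0(V)=\cos(h\Omega)$, $h^2\Omega^2\phi_1(V)^2=\sin^2(h\Omega)$, $h^2\Omega^2\phi_1(V)^2+\phi_0(V)^2=I$ and the identity $h^2\big(\phi_1(V)^2-\phi_0(V)\phi_2(V)\big)=h^2\phi_2(V)$, a short calculation turns \eqref{AAVFmethod} into the three-term recursion
\begin{equation*}
q_{n+1}-2\cos(h\Omega)q_{n}+q_{n-1}=h^2\phi_2(V)\big(\Psi_n+\Psi_{n-1}\big),\qquad \Psi_n:=\int_0^1 f\big((1-\tau)q_n+\tau q_{n+1}\big)\,d\tau,
\end{equation*}
together with the auxiliary relation $h\phi_1(V)\,p_n=\big(I+\cos(h\Omega)\big)\,L_3(hD)\,q_n$ recovering the momentum, where $D$ is the differential operator acting on the modulation functions. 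Once the ansatz \eqref{MFE-ERKN} is inserted and the midpoint quantity $G$ defined by $\Psi_n=(\mathrm{e}^{hD/2}G)_n$, the first identity reads $L_1(hD)q_n=h^2\phi_2(V)L_2(hD)G_n$; dividing by $L_2(hD)$ makes it an equation governed by $L(hD)$, and the argument of $f$ in $G$ becomes $\sum_{|k|<N}\mathrm{e}^{\mathrm{i}k\omega t}L_4(hD,\tau,k)\zeta_h^k(t)$, which is exactly why the operators in \eqref{LLL} are the right ones.

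\emph{The modulation system.} Substituting \eqref{MFE-ERKN}, using that the index shift $n\mapsto n+1$ acts as $\mathrm{e}^{hD}$ and that multiplying by $\mathrm{e}^{\mathrm{i}k\omega t}$ replaces an operator $M(hD)$ by $M(hD+\mathrm{i}k\omega h)$, expanding $f$ into its Taylor series about the mean function $\zeta_h^0$, and comparing the coefficients of $\mathrm{e}^{\mathrm{i}k\omega t}$ for each $|k|<N$, one obtains the modulation system
\begin{equation*}
L(hD+\mathrm{i}k\omega h)\,\zeta_h^k=h^2\phi_2(V)\,F_k\big(\zeta_h^{0},\zeta_h^{\pm1},\dots\big),
\end{equation*}
where $F_k$ is a finite sum of terms $\int_0^1 \tfrac{1}{m!}f^{(m)}(\zeta_h^0)\big(L_4(hD,\tau,k_1)\zeta_h^{k_1},\dots\big)\,d\tau$ over $k_1+\dots+k_m=k$. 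The momentum modulation functions then satisfy $h\phi_1(V)\,\eta_h^k=\big(I+\cos(h\Omega)\big)L_3(hD+\mathrm{i}k\omega h)\,\zeta_h^k$, whose Taylor expansion (Proposition \ref{lhd pro}) produces the stated links, e.g. $\eta^0_{h,1}=\dot\zeta^0_{h,1}+\mathcal{O}(h^2)$ and $\eta^1_{h,2}=\mathrm{i}\omega\zeta^1_{h,2}+\mathcal{O}(h)$.

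\emph{Solving order by order; the size bounds \eqref{coefficient func}.} Here the $d_1$- and $d_2$-blocks are treated separately, and the leading term of $L(hD+\mathrm{i}k\omega h)$ from Proposition \ref{lhd pro} dictates the nature of each equation. For $k=0$ the slow block carries $(hD)^2$, so $\zeta^0_{h,1}$ solves a second-order differential equation with $\mathcal{O}(1)$ data (the analogue of $\ddot q_1=f_1(q)$), while the fast block has the $\mathcal{O}(1)$ algebraic leading coefficient $1-\cos(h\omega)$, forcing $\zeta^0_{h,2}=\mathcal{O}(h^2)$; for $k=\pm1$ the slow block is algebraic ($\zeta^1_{h,1}=\mathcal{O}(h^2)$) while the fast block carries $2\sin(h\omega/2)(\mathrm{i}hD)$, so $\zeta^1_{h,2}$ solves a first-order differential equation and is $\mathcal{O}(h)$ — this is the main oscillation carrier; for $|k|\ge2$ both blocks are algebraic with leading coefficients bounded away from $0$ by the numerical non-resonance condition \eqref{numerical non-resonance cond}, giving the geometric decay $\zeta^k_{h,\bullet}=\mathcal{O}(h^{k+1})$, $\eta^k_{h,\bullet}=\mathcal{O}(h^{k})$. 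Since $F_k$ only couples $\zeta_h^{j}$ with smaller $|j|$, all bounds in \eqref{coefficient func}, and the corresponding bounds for the derivatives (obtained by differentiating the defining differential/algebraic relations), follow by a double induction on $|k|$ and on the order of the formal $h$-expansion, the various $\sqrt h$- and $h^{3/2}$-losses coming from \eqref{numerical non-resonance cond}. Reality of the numerical solution gives $\zeta^{-k}=\overline{\zeta^k}$, $\eta^{-k}=\overline{\eta^k}$.

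\emph{Remainder estimate and main obstacle.} Finally I would truncate the two expansions at $|k|<N$ (and at a suitable power of $h$), insert $\widetilde q_n=\sum_{|k|<N}\mathrm{e}^{\mathrm{i}k\omega t}\zeta_h^k(t)$, $\widetilde p_n=\sum_{|k|<N}\mathrm{e}^{\mathrm{i}k\omega t}\eta_h^k(t)$ into \eqref{AAVFmethod}, and check that the defects are $\mathcal{O}(h^{N+1})$ in the $q$-update and $\mathcal{O}(h^{N})$ in the $p$-update. The errors $q_n-\widetilde q_n$, $p_n-\widetilde p_n$ then obey the AAVF recursion with these defects as inhomogeneities; since the homogeneous one-step map has eigenvalues $\mathrm{e}^{\pm\mathrm{i}h\Omega}$ and is power-bounded uniformly in $h,\omega$, a discrete Gronwall estimate over $0\le nh\le T$ (together with a Lipschitz/fixed-point argument handling the implicit integral term for $h$ small) yields $R_{h,N}=\mathcal{O}(h^{N})$ and $S_{h,N}=\mathcal{O}(h^{N-1})$. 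I expect the main difficulty to be twofold: getting the operator bookkeeping of the first two steps exactly right and verifying, via Proposition \ref{lhd pro} and \eqref{numerical non-resonance cond}, that the small-divisor operators $L(hD+\mathrm{i}k\omega h)$ and $L_3(hD+\mathrm{i}k\omega h)$ are boundedly invertible with precisely the $h$-weights appearing in \eqref{coefficient func}; and keeping the induction for the coefficient bounds and the Gronwall argument for the remainder uniform in $\omega$, which is where the lower bound $h\omega\ge c_0$ and the non-resonance condition are indispensable.
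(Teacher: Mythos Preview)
Your outline matches the paper's proof almost exactly: the two-step reduction, the operator calculus via $L_1$--$L_4$, the modulation system obtained by Taylor-expanding $f$ and comparing coefficients of $\mathrm{e}^{\mathrm{i}k\omega t}$, the case split by $k$ and by the $d_1$/$d_2$ blocks using Proposition~\ref{lhd pro}, and the Gronwall remainder estimate are all there in the same order.

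The one substantive omission is the determination of the initial data for the differential equations governing $\zeta^0_{h,1}$ and, crucially, $\zeta^1_{h,2}$. Your assertion that $\zeta^1_{h,2}=\mathcal{O}(h)$ presupposes $\zeta^1_{h,2}(0)=\mathcal{O}(h)$; the paper gets $\mathrm{Re}\,\zeta^1_{h,2}(0)=\mathcal{O}(h)$ directly from the energy bound \eqref{energy bound} (which forces $q_{0,2}=\mathcal{O}(\omega^{-1})$), but extracting $\mathrm{Im}\,\zeta^1_{h,2}(0)=\mathcal{O}(h)$ from $p_{0,2}$ requires matching the expansion at $t=h$ against the first AAVF step and expanding $q_{1,2}-\cos(h\omega)q_{0,2}$, a computation you have not flagged. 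Without it the bound for the main oscillation carrier, and hence all of \eqref{coefficient func}, is not established. The same initial-value matching is what makes the one-step defect small: since the modulation system is derived from the \emph{two-step} recursion, you only get $\delta_q(t+h)+\delta_q(t-h)=\mathcal{O}(h^{N+2})$ for free, and you need the anchor $\delta_q(0)=\mathcal{O}(h^{N+2})$ (guaranteed precisely by the choice of initial values) to conclude that $\delta_q(t)$ itself is of the right size before the Gronwall argument can be applied to the one-step error recursion.
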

\begin{proof} In this proof, we will construct the functions
\begin{equation}
\begin{aligned} &q_{h}(t)= \sum\limits_{|k|<N} \mathrm{e}^{\mathrm{i}k\omega
t}\zeta_h^k(t),\quad
 \ p_{h}(t)= \sum\limits_{|k|<N} \mathrm{e}^{\mathrm{i}k\omega t}\eta_h^k(t)\\
\end{aligned}
\label{MFE-1}%
\end{equation}
with smooth  coefficient functions $ \zeta_h^k$ and $\eta_h^k$, such
that there is only a small defect when \eqref{MFE-1} is inserted
into the numerical scheme \eqref{AAVFmethod}.

 \vskip1mm \textbf{I. Construction of the
coefficients functions.}

It follows from the symmetry of the AAVF
 method that
\begin{equation}
\begin{aligned}&q_{n+1}-2\cos(h\Omega)q_{n}+q_{n-1}\\
=&h^2\phi_2(V)\big[\displaystyle\int_{0}^{1}f((1-\tau)q_{n}+\tau
q_{n+1})d\tau+
\displaystyle\int_{0}^{1}f((1-\tau)q_{n}+\tau q_{n-1})d\tau\big]\\
=&h^2\phi_2(V)\big[\displaystyle\int_{0}^{1}f((1-\tau)q_{n}+\tau
q_{n+1})d\tau+ \displaystyle\int_{0}^{1}f((1-\tau)q_{n-1}+\tau
q_{n})d\tau\big],
\end{aligned}\label{MFE-2}%
\end{equation}
where we have used the following property
$$\displaystyle\int_{0}^{1}f((1-\tau)q_{n}+\tau
q_{n-1})d\tau=\displaystyle\int_{0}^{1}f((1-\tau)q_{n-1}+\tau
q_{n})d\tau.$$
 For the   term $(1-\tau)q_{n}+\tau
q_{n+1}$, we look for a  function of the form
\begin{equation*}
\begin{aligned} &\tilde{q}_{h}(t+\frac{h}{2},\tau)=\sum\limits_{|k|<N}
\mathrm{e}^{\mathrm{i}k\omega
(t+\frac{h}{2})}\xi_h^k(t+\frac{h}{2},\tau)
\end{aligned}
\end{equation*}
as its modulated Fourier expansion. Then one has
\begin{equation*}
\begin{aligned} \tilde{q}_{h}(t+\frac{h}{2},\tau)&=(1-\tau)\sum\limits_{|k|<N} \mathrm{e}^{\mathrm{i}k\omega
t}\zeta_h^k(t)+\tau \sum\limits_{|k|<N}
\mathrm{e}^{\mathrm{i}k\omega (t+h)}\zeta_h^k(t+h)\\
&=\sum\limits_{|k|<N} \mathrm{e}^{\mathrm{i}k\omega
(t+\frac{h}{2})}\Big((1-\tau)\mathrm{e}^{-\mathrm{i}k\omega
 \frac{h}{2}}\mathrm{e}^{-
 \frac{h}{2}D} +\tau\mathrm{e}^{\mathrm{i}k\omega
 \frac{h}{2}} \mathrm{e}^{
 \frac{h}{2}D}\Big)\zeta_h^k(t+\frac{h}{2}),
\end{aligned}
\end{equation*}
which yields
\begin{equation}\label{xip}
\begin{aligned} \xi_h^k(t+\frac{h}{2},\tau)=&\Big((1-\tau)\mathrm{e}^{-\mathrm{i}k\omega
 \frac{h}{2}}\mathrm{e}^{-
 \frac{h}{2}D} +\tau\mathrm{e}^{\mathrm{i}k\omega
 \frac{h}{2}} \mathrm{e}^{
 \frac{h}{2}D}\Big)\zeta_h^k(t+\frac{h}{2})\\
 =&L_4(hD,\tau,k)\zeta_h^k(t+\frac{h}{2}).
\end{aligned}
\end{equation}
Similarly, for $(1-\tau)q_{n-1}+\tau q_{n}$, we have the following
modulated Fourier expansion
\begin{equation*}
\begin{aligned} &\tilde{q}_{h}(t-\frac{h}{2},\tau)=\sum\limits_{|k|<N}
\mathrm{e}^{\mathrm{i}k\omega
(t-\frac{h}{2})}\xi_h^k(t-\frac{h}{2},\tau)
\end{aligned}
\end{equation*}
with
\begin{equation}\label{xim}
\begin{aligned} \xi_h^k(t-\frac{h}{2},\tau)=\Big((1-\tau)\mathrm{e}^{-\mathrm{i}k\omega
 \frac{h}{2}}\mathrm{e}^{-
 \frac{h}{2}D} +\tau\mathrm{e}^{\mathrm{i}k\omega
 \frac{h}{2}} \mathrm{e}^{
 \frac{h}{2}D}\Big)\zeta_h^k(t-\frac{h}{2})=L_4(hD,\tau,k)\zeta_h^k(t-\frac{h}{2}).
\end{aligned}
\end{equation}
Inserting these modulated Fourier expansions into \eqref{MFE-2}
implies
\begin{equation*}
\begin{aligned}&q_{h}(t+h)-2\cos(h\Omega)q_{h}(t)+q_{h}(t-h)\\
=&h^2\phi_2(V)\big[\displaystyle\int_{0}^{1}f(\tilde{q}_{h}(t+\frac{h}{2},\tau))d\tau+
\displaystyle\int_{0}^{1}f(\tilde{q}_{h}(t-\frac{h}{2},\tau))d\tau\big].
\end{aligned}
\end{equation*}

According to the definitions given in \eqref{LLL}, this result can
be rewritten as
\begin{equation*}
\begin{aligned} &L_1(hD)q_{h}(t)=h^2\phi_2(V) L_2(hD)
\displaystyle\int_{0}^{1}f(\tilde{q}_{h}(t,\tau))d\tau,
\end{aligned}
\end{equation*}
which means
\begin{equation*}
\begin{aligned} &L(hD)q_{h}(t)=h^2\phi_2(V)
\displaystyle\int_{0}^{1}f(\tilde{q}_{h}(t,\tau))d\tau.
\end{aligned}
\end{equation*}
By expanding the nonlinear function $f$  at $\xi_h^0(t)$ into its
Taylor series, and comparing the coefficients of
$\mathrm{e}^{\mathrm{i}k\omega t}$, one arrives at
\begin{equation*}
\begin{aligned}  &L(hD)\zeta_h^0(t)=h^2\phi_2(V)\int_{0}^{1}\Big(f(\xi_h^0(t,\tau))+
\sum\limits_{s(\alpha)=0}\frac{1}{m!}f^{(m)}(\xi_h^0(t,\tau))(\xi_h(t,\tau))^{\alpha}\Big)d\tau,\\
&L(hD+\mathrm{i}kh\omega)\zeta_h^k(t)=h^2\phi_2(V)\int_{0}^{1}
\sum\limits_{s(\alpha)=k}\frac{1}{m!}f^{(m)}(\xi_h^0(t,\tau))(\xi_h(t,\tau))^{\alpha}d\tau,\qquad k\neq0,\\
\end{aligned} %
\end{equation*}
where the sum ranges over $m\geq0$,
$\alpha=(\alpha_1,\ldots,\alpha_m)$ with integer $\alpha_i$
satisfying $0<|\alpha_i|<N$,
$s(\alpha)=\sum\limits_{j=1}^{m}\alpha_j,$ and
$(\xi_h(t,\tau))^{\alpha}$ is an abbreviation for
$(\xi_h^{\alpha_1}(t,\tau),\ldots,\xi_h^{\alpha_m}(t,\tau))$. This
formula as well as \eqref{xip} and \eqref{xim} gives the modulation
system for the coefficients $\zeta_h^k(t)$ of the modulated Fourier
expansion $q_n$. Considering the dominate terms in the relations
motivates the following ansatz:
\begin{equation}\label{ansatz}%
\begin{array}
[c]{rll}%
&\ddot{\zeta}^0_{h,1}(t)=G^0_{\pm10}(\cdot)+\cdots, \qquad
&\zeta^0_{h,2}(t)=\frac{1}{
\omega^2}\big(G^0_{\pm20}(\cdot)+\cdots\big),   \\
&\zeta^1_{h,1}(t)=
\frac{-h^2\cos(\frac{h\omega}{2})}{4\sin^2(\frac{h\omega}{2})}\big(G^1_{\pm10}(\cdot)+\cdots\big),
\qquad  & \dot{\zeta}^1_{h,2}(t)=\frac{-\textmd{i}}{2\omega}\sinc(\frac{h\omega}{2})\big(G^1_{\pm20}(\cdot)+\cdots\big),\\
&\zeta^k_{h,1}(t)=
\frac{-h^2\cos(\frac{kh\omega}{2})}{4\sin^2(\frac{k
h\omega}{2})}\big(G^k_{\pm10}(\cdot)+\cdots\big),\qquad
&\zeta^k_{h,2}(t)=\frac{ h^2\phi_2(h
\omega)\cos(\frac{kh\omega}{2})}{-2\sin(\frac{k+1}{2}h\omega)\sin(\frac{k-1}{2}h\omega)
}\big(G^k_{\pm20}(\cdot)+\cdots\big),
\end{array}
\end{equation}
where  the dots  stand  for power series in $\sqrt{h}$. Following
\cite{Hairer00,Hairer16},  we
 truncate the ansatz after the $\mathcal{O}(h^{N+1})$ terms.

Using the scheme of the AAVF method \eqref{AAVFmethod} again, it is
obtained that
\begin{equation*}
\begin{aligned}
 q_{n+1}&=\phi_0(V)q_n+h\phi_1(V)p_n+h\phi_2(V)\phi_1^{-1}(V)(
p_{n+1}+h\Omega^2\phi_1(V)q_n-\phi_0(V)p_n),
\end{aligned}
\end{equation*}
which can be simplified as
$$q_{n+1}-\big(\phi_0(V)+V\phi_2(V)\big)q_n=h\phi_2(V)\phi_1^{-1}(V)p_{n+1}+h\big(\phi_1(V)-\phi_0(V)\phi_1^{-1}(V)\phi_2(V)\big)p_n.$$
According to the definition of $\phi$-functions given by
\eqref{Phi01}, it can be verified straightforwardly that
\begin{equation*}
\begin{aligned}  &\phi_0(V)+V\phi_2(V)=I,\\
&\phi_2(V)\phi_1^{-1}(V)=\phi_1(V)-\phi_0(V)\phi_1^{-1}(V)\phi_2(V)=\tan(\frac{1}{2}h\Omega)(h\Omega)^{-1}.
\end{aligned} %
\end{equation*}
We then obtain \begin{equation}\label{qp connec} q_{n+1}-q_n
=\Omega^{-1}\tan(\frac{1}{2}h \Omega) ( p_{n+1}+ p_n
).\end{equation} By the definition of $L_3$, this relation can be
expressed as $$ L_3(hD)q_h(t)=\Omega^{-1}\tan(\frac{1}{2}h \Omega)
p_h(t).$$ Therefore, we get   the modulation system for the
coefficients $\eta_h^k(t)$ of the modulated Fourier expansion $p_n$
as
\begin{equation}\label{eta modula sys}
\begin{aligned}  & \eta_h^0(t)= \Omega \tan^{-1}(\frac{1}{2}h  \Omega)
L_3(hD)\zeta_h^0(t),\ \   \eta_h^k(t)=\Omega \tan^{-1}(\frac{1}{2}h
\Omega) L_3(hD+\mathrm{i}kh\omega)\zeta_h^k(t)
\end{aligned} %
\end{equation}
for $k\neq0.$ In the light of the Taylor series of $L_3$, one has
the following relationship between $\eta_h^k$ and   $\zeta_h^k$:
\begin{equation}\label{ansatz-1}%
\begin{array}
[c]{rll}%
& \eta^0_{h,1}(t)=\dot{\zeta}^0_{h,1}(t)+\mathcal{O}(h^2), \qquad
&\eta^0_{h,2}(t)=\frac{\cos(\frac{h\omega}{2})}{\sinc(\frac{h\omega}{2})}\dot{\zeta}^0_{h,2}(t)+\mathcal{O}(h),   \\
&\eta^1_{h,1}(t)=\textmd{i}
\omega\frac{\sinc(\frac{h\omega}{2})}{\cos(\frac{h\omega}{2})}\zeta^1_{h,1}(t)+\mathcal{O}(h),
\qquad  & \eta^1_{h,2}=\textmd{i} \omega\zeta^1_{h,2}+\mathcal{O}(h),\\
&\eta^k_{h,1}(t)=\textmd{i}k
\omega\frac{\sinc(\frac{kh\omega}{2})}{\cos(\frac{kh\omega}{2})}
\zeta ^k_{h,1}(t)+\mathcal{O}(h),\qquad &\eta^k_{h,2}(t)=\textmd{i}
\omega\frac{\tan(\frac{kh\omega}{2})}{\tan(\frac{h\omega}{2})}\zeta^k_{h,2}+\mathcal{O}(h),
\end{array}
\end{equation}
where $\abs{k}>1.$ This presents the modulation equation of
$\eta_h^k$.
 \vskip1mm

 \textbf{II. Initial values.} By the conditions that
\eqref{MFE-ERKN} is satisfied without  the remainder term for $t =
0$ and $t = h$,  the initial values for the differential equations
of $\zeta^0_{h,1}$ and $\zeta^1_{h,2}$ can be determined as follows.

Considering the conditions  $p_{h}(0)=p_0$ and
 $q_{h}(0)=q_0$,
we get
\begin{equation*}
\begin{aligned}
&p_{0,1}=\eta_{h,1}^0(0)+\mathcal{O}(h)=\dot{\zeta}^0_{h,1}(0)+\mathcal{O}(h),\\
&q_{0,1}=\zeta_{h,1}^0(0)+\mathcal{O}(h),\
 \ q_{0,2}=2\mathrm{Re}(\zeta^1_{h,2}(0))+\mathcal{O}(h^2).\end{aligned} %
\end{equation*}
This gives the initial values
$\zeta_{h,1}^0(0),\dot{\zeta}^0_{h,1}(0)$  and
$\mathrm{Re}(\zeta^1_{h,2}(0))$. Moreover, it follows from
\eqref{energy bound} that $q_{0,2}=\mathcal{O}(\omega^{-1}),$ which
implies that $\mathrm{Re}(\zeta^1_{h,2}(0))=\mathcal{O}(h)$. In what
follows, we derive the value of $\mathrm{Im}(\zeta^1_{h,2}(0))$.

From $ q_{h,1}(h)=q_{1,1},\ q_{h,2}(h)=q_{1,2}$ and the first
formula of AAVF method, it follows that
\begin{equation*}
q_{1,2}-\cos(h\omega)q_{0,2}=h \mathrm{sinc}(h\omega)p_{0,2}+
\mathcal{O}(h^2).\end{equation*}  We compute
\begin{equation*}
\begin{aligned}
&q_{1,2}-\cos(h\omega)q_{0,2}=q_{h,2}(h)-\cos(h\omega)q_{h,2}(0)\\
=&\sum\limits_{|k|<N} \mathrm{e}^{\mathrm{i}k\omega
h}\zeta_{h,2}^k(h)-\cos(h\omega)\sum\limits_{|k|<N}
\zeta_{h,2}^k(0)\\
=&\zeta_{h,2}^0(h)+ \mathrm{e}^{\mathrm{i}\omega h}\zeta_{h,2}^1(h)+
\mathrm{e}^{-\mathrm{i}\omega
h}\zeta_{h,2}^{-1}(h)-\cos(h\omega)\Big(\zeta_{h,2}^0(0)+
\zeta_{h,2}^1(0)+\zeta_{h,2}^{-1}(0)\Big)+\mathcal{O}(h^2).
\end{aligned}
\end{equation*}
Expanding the functions $\zeta_{h,2}^0(h),\ \zeta_{h,2}^{1}(h),\
\zeta_{h,2}^{-1}(h)$ at $h=0$ yields
\begin{equation*}
\begin{aligned}
q_{1,2}-\cos(h\omega)q_{0,2}=&(1-\cos(h\omega))\zeta_{h,2}^0(0)+\mathrm{i}\sin(h\omega)
(\zeta_{h,2}^1(0)-\zeta_{h,2}^{-1}(0)) +\mathcal{O}(h^2).
\end{aligned}
\end{equation*}
 It is clear from
$1-\cos(h\omega)=\frac{1}{2}h^2\omega^2\mathrm{sinc}^2(h\omega/2)$
that
\begin{equation*}
\begin{aligned}
&(1-\cos(h\omega))\zeta_{h,2}^0(0)=\frac{1}{2}h^2\omega^2\mathrm{sinc}^2(h\omega/2)\zeta_{h,2}^0(0)=2\sin^2(h\omega/2)\zeta_{h,2}^0(0)=\mathcal{O}(h^2).
\end{aligned}
\end{equation*}
 Thus it is confirmed
that
\begin{equation*}
\begin{aligned}
\mathrm{i}\sin(h\omega) (\zeta_{h,2}^1(0)-\zeta_{h,2}^{-1}(0))  =&h
\mathrm{sinc}(h\omega)p_{0,2}+\mathcal{O}(h^2),
\end{aligned}
\end{equation*}
which yields
$2\mathrm{Im}(\zeta_{h,2}^1(0))=\omega^{-1}p_{0,2}+\mathcal{O}(h)=\mathcal{O}(h).$

 \vskip1mm
\textbf{III. Bounds of the coefficients functions.} Based on
Assumption \ref{ass}, the ansatz given by \eqref{ansatz} and
\eqref{ansatz-1}, and the initial values presented in the above
part, the bounds shown in  \eqref{coefficient func} are easily
derived.

 \vskip1mm
 \textbf{IV. Remainder.}  For $t=nh$, let
\begin{equation*}
\begin{aligned}
 \delta_q(t+h)=& q_{h}(t+h)-\phi_0(V)q_{h}(t)-h\phi_1(V)p_{h}(t)\\
 &-h^2\phi_2(V)\displaystyle\int_{0}^{1}f((1-\tau)q_{h}(t)+\tau q_{h}(t+h))d\tau,\\
  \delta_p(t+h)= &p_{h}(t+h)+h\Omega^2\phi_1(V)q_{h}(t)-\phi_0(V)p_{h}(t)\\
  &-h\phi_1(V)\displaystyle\int_{0}^{1}f((1-\tau)q_{h}(t)+\tau
q_{h}(t+h))d\tau.
\end{aligned}
\end{equation*}
It is clear from the two-step formulation that
$\delta_q(t+h)+\delta_q(t-h)=\mathcal{O}(h^{N+2})$.  According to
the choice for the initial values, we obtain
$\delta_q(0)=\mathcal{O}(h^{N+2}).$ Thus it is derived that
$\delta_q(t)=\mathcal{O}(h^{N+2})+\mathcal{O}(th^{N+1}).$  Then
according to \eqref{qp connec}, one gets
$\delta_p=\mathcal{O}(h^{N}).$ By letting $R_n=q_n-q_{h}(t)$ and
$S_n=p_n-p_{h}(t),$ we obtain the following  error recursion
\begin{equation*}
\begin{aligned}
&\left(
  \begin{array}{c}
     \Omega R_{n+1} \\
     S_{n+1}\\
  \end{array}
\right) =\left(
                                                           \begin{array}{cc}
                                                             \cos(h\Omega) & \sin(h\Omega) \\
                                                             -\sin(h\Omega) & \cos(h\Omega) \\
                                                           \end{array}
                                                         \right)\left(
  \begin{array}{c}
     \Omega R^{n} \\
     S^{n}\\
  \end{array}
\right)\\
+&h\left(
          \begin{array}{c}
            h \Omega \phi_2(V)\displaystyle\int_{0}^{1}\big(f((1-\tau)q_{n}+\tau q_{n+1})-f((1-\tau)q_{h}(t)+\tau q_{h}(t+h))\big)d\tau \\
            \phi_1(V)\displaystyle\int_{0}^{1}\big(f((1-\tau)q_{n}+\tau q_{n+1})-f((1-\tau)q_{h}(t)+\tau q_{h}(t+h))\big)d\tau \\
          \end{array}
        \right)\\
        +&\left(
          \begin{array}{c}
          \Omega \delta_q\\
    \delta_p \\
          \end{array}
        \right).
\end{aligned}
\end{equation*}
By using the Lipschitz continuous of the nonlinearity,   one obtains
$$\norm{f((1-\tau)q_{n}+\tau q_{n+1})-f((1-\tau)q_{h}(t)+\tau q_{h}(t+h))}\leq\norm{R_n}+\norm{R_{n+1}}.$$
Then the remainder \eqref{remainder} can be derived by solving the
error recursion and the application of a discrete Gronwall
inequality.

 The proof of this theorem is complete.
 \hfill\end{proof}

\section{Long-time oscillatory energy conservation} \label{sec:oscillatory energy conservation}
This section is devoted to  showing the long-time oscillatory energy
conservation of the AAVF method.

Denote $\zeta=\big(\zeta^{-N+1}_h,\cdots,\zeta^{-1}_h,
\zeta^{0}_h,\zeta^{1}_h,\cdots,\zeta^{N-1}_h\big). $ The modulation
functions of the AAVF method  have the following almost invariant.
\begin{theo}\label{second invariant thm}
Suppose that the conditions of Theorem \ref{energy thm} hold. For
the coefficient functions of   the modulated Fourier expansion,
there exists a function $\widehat{\mathcal{I}}[\zeta]$ such that
\begin{equation*}
\widehat{\mathcal{I}}[\zeta](t)=\widehat{\mathcal{I}}[\zeta](0)+\mathcal{O}(th^{N}),
\end{equation*}
where $0\leq t\leq T.$ Moreover, this almost invariant can be
expressed as
\begin{equation*}
\widehat{\mathcal{I}}[\zeta] =2\omega^2\frac{
\cos(\frac{1}{2}h\omega) }{\sinc(\frac{1}{2}h\omega)}
\big(\zeta_{h,2}^{-1}\big)^\intercal\zeta_{h,2}^{1}
+\mathcal{O}(h^2).
\end{equation*}
\end{theo}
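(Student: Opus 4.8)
The plan is to transfer to the AAVF setting the derivation of the oscillatory-energy almost invariant for trigonometric integrators (\cite{hairer2006}), replacing the symmetric second-order operator used there by the operators $L$, $L_2$, $L_4$ of the present scheme. Everything hinges on two structural properties of the modulation system built in the proof of Theorem \ref{energy thm}. Write that system as $\mathcal{A}_k(hD)\zeta_h^k = g_h^k(\zeta)$, where $\mathcal{A}_k(hD) := h^{-2}\phi_2(V)^{-1}L(hD+\mathrm{i}kh\omega)$ and $g_h^k(\zeta)$ is the corresponding (divided) right-hand side, assembled from $f$ and the $L_4(hD,\tau,k)$-images of the $\zeta_h^j$ after integration in $\tau$. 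The first property is \emph{adjointness}: $L(hD)$ is even in $hD$ (this is precisely the symmetry of the AAVF method that produced \eqref{MFE-2}), hence $\mathcal{A}_{-k}(hD)=\mathcal{A}_k(-hD)$, and since $\phi_2(V)$ is symmetric, $\mathcal{A}_k$ and $\mathcal{A}_{-k}$ are formal adjoints with respect to the $\int(\cdot)\,dt$ pairing. The second property is a \emph{variational structure}: $g_h^k(\zeta) = -\nabla_{\zeta_h^{-k}}\mathcal{U}(\zeta)$ for a single real functional $\mathcal{U}(\zeta)(t) = \int_0^1 \widetilde{U}\big((L_4(hD,\tau,j)\zeta_h^j)_{|j|<N}\big)\,d\tau$, where $\widetilde{U}$ is the extended potential of $U$ (the formal zero-mode of $U$ applied to a trigonometric polynomial); here the symmetric integral form in \eqref{MFE-2}, i.e. the $\tau\leftrightarrow 1-\tau$ symmetry, together with the adjointness of $L_2(hD)$, is what makes the right-hand sides exact gradients in the $\zeta_h^{-k}$. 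Moreover $\mathcal{U}$ is invariant under $\zeta_h^k\mapsto\mathrm{e}^{\mathrm{i}k\theta}\zeta_h^k$, since each $L_4(hD,\tau,j)$ commutes with scalar multiplication and every monomial of $\widetilde{U}$ has total frequency index zero.

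Granting this, I would examine $\sum_{|k|<N}(-\mathrm{i}k)\,(\zeta_h^{-k})^\intercal\,\mathcal{A}_k(hD)\zeta_h^k$. On one side, the adjointness $\mathcal{A}_{-k}=\mathcal{A}_k^\dagger$ permits symmetrising the sum, and integration by parts at the level of formal differential operators shows that $u\,P(hD)v - v\,P(-hD)u$ is an exact total $t$-derivative for every constant-coefficient operator $P$; hence the sum equals $\frac{d}{dt}\widehat{\mathcal{I}}[\zeta]$ exactly, for an explicit functional $\widehat{\mathcal{I}}[\zeta]$ quadratic in the $\zeta_h^k$ and their derivatives. On the other side, differentiating $\mathcal{U}(\mathrm{e}^{\mathrm{i}k\theta}\zeta)=\mathcal{U}(\zeta)$ at $\theta=0$ gives $\sum_k \mathrm{i}k\,(\zeta_h^k)^\intercal\nabla_{\zeta_h^k}\mathcal{U}(\zeta)=0$, so after relabelling $k\mapsto -k$ the combination $\sum_k(-\mathrm{i}k)(\zeta_h^{-k})^\intercal g_h^k(\zeta)$ vanishes identically; what survives is only the defect by which the truncated ansatz fails to solve the modulation system, which by the bounds \eqref{coefficient func} contributes $\mathcal{O}(h^N)$. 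Combining the two gives $\frac{d}{dt}\widehat{\mathcal{I}}[\zeta]=\mathcal{O}(h^N)$ uniformly on $[0,T]$, and integration over $[0,t]$ yields $\widehat{\mathcal{I}}[\zeta](t)=\widehat{\mathcal{I}}[\zeta](0)+\mathcal{O}(th^N)$.

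It then remains to extract the dominant part of $\widehat{\mathcal{I}}[\zeta]$. The weight $-\mathrm{i}k$ annihilates $k=0$, and since every $L(hD+\mathrm{i}kh\omega)$ is diagonal (Proposition \ref{lhd pro}), $\widehat{\mathcal{I}}[\zeta]$ decomposes into a sum over $k\ne 0$ of quadratic forms in $(\zeta_{h,1}^{-k},\zeta_{h,1}^{k})$ and in $(\zeta_{h,2}^{-k},\zeta_{h,2}^{k})$ separately; moreover, since $u\,v-v\,u=0$, the constant-in-$hD$ parts of the $\mathcal{A}_k$ drop out of $\widehat{\mathcal{I}}$. Substituting the leading Taylor coefficients of $L(hD\pm\mathrm{i}h\omega)$ and of $\phi_2$ from Proposition \ref{lhd pro}, together with the size estimates $\zeta_{h,2}^{\pm1}=\mathcal{O}(h)$, $\zeta_{h,1}^{\pm1}=\mathcal{O}(h^2)$, $\zeta_{h,2}^{0}=\mathcal{O}(h^2)$ and $\zeta_h^{k}=\mathcal{O}(h^{|k|+1})$ for $|k|\ge 2$ from \eqref{coefficient func}, all contributions except the $k=\pm1$ second-component one are $\mathcal{O}(h^2)$, whereas that term collapses to $2\omega^2\,\frac{\cos(\frac{1}{2}h\omega)}{\sinc(\frac{1}{2}h\omega)}\,(\zeta_{h,2}^{-1})^\intercal\zeta_{h,2}^{1}$, as claimed.

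The step I expect to be the main obstacle is establishing the variational structure $g_h^k(\zeta)=-\nabla_{\zeta_h^{-k}}\mathcal{U}(\zeta)$ with $\mathcal{U}$ rotationally invariant: one must keep careful track of the operators $L_2(hD)$ and $L_4(hD,\tau,k)$ and of the $\tau$-integration, verifying that after symmetrisation the right-hand sides really are exact gradients of one potential in the variables $\zeta_h^{-k}$ — a bookkeeping that has no counterpart for plain trigonometric integrators, whose modulation equations carry no such auxiliary operators. Once this and the elementary adjointness are secured, the energy-type cancellation and the identification of the leading term are routine manipulations with the expansions of Proposition \ref{lhd pro}.
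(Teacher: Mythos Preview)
Your overall plan—apply a Noether-type rotational-invariance identity to the modulation system and show that the left-hand side is a total $t$-derivative—is exactly the paper's strategy, and your adjointness observation $\mathcal{A}_{-k}(hD)=\mathcal{A}_k(-hD)$ as well as the extraction of the leading term are correct. The gap is precisely where you locate it, but your proposed resolution does not work and the paper's is different.

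The difficulty is that the right-hand side $g_h^k(\zeta)=\int_0^1\sum_{s(\alpha)=k}\frac{1}{m!}f^{(m)}(\xi_h^0)(\xi_h)^\alpha\,d\tau$ is \emph{not} $-\nabla_{\zeta_h^{-k}}\mathcal{U}(\zeta)$ for your $\mathcal{U}(\zeta)=\int_0^1\widetilde U\big((L_4(hD,\tau,j)\zeta_h^j)_j\big)\,d\tau$: differentiating through the chain rule forces the adjoint $L_4(-hD,\tau,-k)=L_4(hD,1-\tau,k)$ in front, and this factor is absent from $g_h^k$. The $\tau\leftrightarrow 1-\tau$ symmetry relabels the integral but does not cancel that operator, so $\sum_k(-\mathrm{i}k)(\zeta_h^{-k})^\intercal g_h^k$ does not vanish, and your pairing with the bare $\zeta_h^{-k}$ fails to produce an almost invariant.

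The paper avoids this by never attempting a $\zeta$-variational structure. Instead it applies the Noether identity in the $\tilde q$ variables, where $\mathcal{U}(\tilde q)$ is manifestly invariant under $\tilde q_h^k\mapsto \mathrm{e}^{\mathrm{i}k\lambda\omega}\tilde q_h^k$, obtaining $\sum_{|k|<N}\mathrm{i}k\omega\,(\tilde q_h^{-k})^\intercal\nabla_{-k}\mathcal{U}(\tilde q)=0$; it then substitutes back $\tilde q_h^{-k}(t,\tfrac12)=L_4(hD,\tfrac12,-k)\zeta_h^{-k}$ and the modulation equation for $\nabla_{-k}\mathcal{U}$. The resulting quantity
\[
\sum_{|k|<N}\mathrm{i}k\omega\,\big(L_4(hD,\tfrac12,-k)\zeta_h^{-k}\big)^\intercal\,\tfrac{1}{-h^2}\,\phi_2^{-1}(V)\,L(hD+\mathrm{i}kh\omega)\zeta_h^{k}
\]
is the one shown to be a total derivative. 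The special value $\tau=\tfrac12$ is the point: only there does $L_4$ satisfy $L_4(-hD,\tfrac12,-k)=L_4(hD,\tfrac12,k)$, so the composite operator on each summand has the $k\leftrightarrow -k$, $hD\leftrightarrow -hD$ symmetry needed for the total-derivative argument (the ``formulae on p.~508'' of \cite{hairer2006}). In short, the correct test function is $L_4(hD,\tfrac12,-k)\zeta_h^{-k}$, not $\zeta_h^{-k}$; once you make that change, both the Noether cancellation on the right and the total-derivative structure on the left go through, and the rest of your argument (including the identification of the $2\omega^2\frac{\cos(h\omega/2)}{\sinc(h\omega/2)}(\zeta_{h,2}^{-1})^\intercal\zeta_{h,2}^1$ leading term) is as in the paper.
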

\begin{proof}
 With the proof of Theorem \ref{energy
thm} proposed in the previous section, one obtains
\begin{equation*}
\begin{aligned}
L(hD)q_{h}(t)=h^2\phi_2(V)
\displaystyle\int_{0}^{1}f(\tilde{q}_{h}(t,\tau))d\tau+\mathcal{O}(h^{N+2}),
\end{aligned}
\end{equation*}
where  we use the following denotations:
\begin{equation*}
\begin{aligned}q_{h}(t)=\sum\limits_{ |k|<N}q^k_{h}(t),\ \ \tilde{q}_{h}(t,\tau)=\sum\limits_{
|k|<N}\tilde{q}^k_{h}(t,\tau).
\end{aligned}
\end{equation*}
Here $q^k_{h}$ and $\tilde{q}^k_{h}$ are defined as
$q^k_{h}(t)=\mathrm{e}^{\mathrm{i}k\omega t}\zeta_h^k(t)$ and $
\tilde{q}^k_{h}(t,\tau)=\mathrm{e}^{\mathrm{i}k\omega
t}\xi_h^k(t,\tau),$ respectively.  By considering the  definitions
of $\tilde{q}_{h},\ q_h$ and comparing the coefficients of
$\mathrm{e}^{\mathrm{i}k\omega t}$, we obtain   the  equations in
terms of $q_h^k:$
\begin{equation*}
L(hD)q^k_{h}(t)=-h^2\phi_2(V) \displaystyle\nabla_{-k}
\mathcal{U}(\tilde{q}(t,\tau))+\mathcal{O}(h^{N+2}),
\end{equation*}
where $\mathcal{U}(\tilde{q}(t,\tau))$ is defined as
\begin{equation}
\begin{aligned}
&\mathcal{U}(\tilde{q}(t,\tau))=\int_{0}^{1}U(\tilde{q}^0_h(t,\tau))d\tau+
\sum\limits_{s(\alpha)=0}\frac{1}{m!}\int_{0}^{1}U^{(m)}(\tilde{q}^0_h(t,\tau))
(\tilde{q}_h(t,\tau))^{\alpha}d\tau,
\end{aligned}
\label{newuu}%
\end{equation}
and $\tilde{q}(t,\tau)$ is given by
\begin{equation*}
\begin{aligned}
\tilde{q}(t,\tau)=\big(\tilde{q}^{-N+1}_h(t,\tau),\ldots,
\tilde{q}^{0}_h(t,\tau),\ldots,\tilde{q}^{N-1}_h(t,\tau)\big).
\end{aligned}
\end{equation*}

  Define  a vector function
$\tilde{q}(\lambda,t,\tau)$ of $\lambda$ as below
$$\tilde{q}(\lambda,t,\tau)=\big( \mathrm{e}^{\mathrm{i}(-N+1)\lambda
\omega}\tilde{q}^{-N+1}_h(t,\tau),\cdots,
\tilde{q}^{0}_h(t,\tau),\cdots,\mathrm{e}^{\mathrm{i}(N-1)\lambda
\omega}\tilde{q}^{N-1}_h(t,\tau)\big).$$ It can be observed from the
definition \eqref{newuu}
 that $\mathcal{U}( \tilde{q}(\lambda,t,\tau))$ is  independent of
$\lambda$ and $\tau$. Thus, considering its derivative with respect
to $\lambda$ implies
\begin{equation*}
\begin{aligned}0=& \frac{\partial}{\partial\lambda}  \mathcal{U}(
\tilde{q}(\lambda,t,\tau))  =\Big(\frac{\partial}{\partial\tilde{q}}
 \mathcal{U}( \tilde{q}(\lambda,t,\tau)) \Big)^\intercal
\frac{\partial}{\partial\lambda }\tilde{q}(\lambda,t,\tau)\\ =&
\sum\limits_{|k|<N}\mathrm{i}k\omega\mathrm{e}^{\mathrm{i}k\lambda
\omega} (\tilde{q}^{k}_h(\lambda,t,\tau))^\intercal\nabla_{k}
  \mathcal{U}( \tilde{q}(\lambda,t,\tau)).\end{aligned}
\end{equation*}
The choice of $\lambda=0$ and $\tau=\frac{1}{2}$ yields $
\sum\limits_{|k|<N}\mathrm{i}k\omega
(\tilde{q}^{k}_h(t,\frac{1}{2}))^\intercal \nabla_{ k} \mathcal{U}(
\tilde{q}(t,\frac{1}{2}))=0. $ Therefore, one gets
\begin{equation}
\begin{aligned}
0=&\sum\limits_{|k|<N}\mathrm{i}k\omega
(\tilde{q}^{-k}_h(t,\frac{1}{2}))^\intercal \nabla_{ -k}
\mathcal{U}( \tilde{q}(t,\frac{1}{2}))\\
=& \sum\limits_{|k|<N}\mathrm{i}k\omega
(\tilde{q}^{-k}_h(t,\frac{1}{2}))^\intercal\frac{1}{-h^2}
\phi^{-1}_2(V) L(hD)q^k_{h}(t)+\mathcal{O}(h^{N}).
\end{aligned}\label{duu-00}
\end{equation}
Inserting the expressions of $q^k_{h}$ and $\tilde{q}^k_{h}$ into
\eqref{duu-00} gives
\begin{equation}
\begin{aligned}
\mathcal{O}(h^{N})&=\sum\limits_{|k|<N}\mathrm{i}k\omega
(\xi_h^{-k}(t,\frac{1}{2}))^\intercal\frac{1}{-h^2} \phi^{-1}_2(V)
L(hD+\mathrm{i}k\omega h)\zeta^k_{h}(t) \\
&=\sum\limits_{|k|<N}\mathrm{i}k\omega (L_4(hD,\frac{1}{2},-k)
\zeta_h^{-k}(t))^\intercal\frac{1}{-h^2} \phi^{-1}_2(V)
L(hD+\mathrm{i}k\omega h)\zeta^k_{h}(t).
\end{aligned}
\label{duu-I}%
\end{equation}
By Proposition \ref{lhd pro}, we get
\begin{equation*}
\begin{aligned}
&L_4(hD, \frac{1}{2},-k)
\zeta_h^{-k}(t)=(\cdot)\bar{\zeta}^k_{h}+\textmd{i}h(\cdot)\dot{\bar{\zeta}}^k_{h}+h^2(\cdot)
\ddot{\bar{\zeta}}^k_{h}+\cdots, \\
&L(hD+\mathrm{i}k\omega
h)\zeta^k_{h}=(\cdot)\zeta^k_{h}+\textmd{i}h(\cdot)\dot{\zeta}^k_{h}+h^2(\cdot)
\ddot{\zeta}^k_{h}+\cdots.
\end{aligned}
\end{equation*}
Looking closer to the right-hand side of  \eqref{duu-I}, using the
above expressions of  $L_4$ and $L$,  and considering the  formulae
on p. 508 of \cite{hairer2006},
 it can be verified that the right-hand side of  \eqref{duu-I}  is a total
derivative.
 Therefore,  there exists a function $\widehat{\mathcal{I}}$ such
 that
$\frac{d}{dt}\widehat{\mathcal{I}}[\zeta](t)=\mathcal{O}(h^{N})$. An
integration of it immediately   implies the first  statement of the
theorem.

By the previous analysis and  the bounds of Theorem \ref{energy
thm},  the construction of $\widehat{\mathcal{I}}$  is obtained as
follows:
\begin{equation*}
\begin{aligned}\widehat{\mathcal{I}}[\zeta]=&2\frac{2h\omega  \sin(\frac{1}{2}h\omega)\cos(\frac{1}{2}h\omega)
}{h^2\phi_2(h\omega)}\frac{1}{2}\big(\zeta_{h,2}^{-1}\big)^\intercal\zeta_{h,2}^{1}
+\mathcal{O}(h^2) \\
=&2\omega^2\frac{  \cos(\frac{1}{2}h\omega)
}{\sinc(\frac{1}{2}h\omega)}
\big(\zeta_{h,2}^{-1}\big)^\intercal\zeta_{h,2}^{1}
+\mathcal{O}(h^2).
\end{aligned}
\end{equation*}
We complete the proof of this theorem.
 \hfill
\end{proof}

We are now in a position to present the main result of this paper.

\begin{theo}\label{HHthm}Define $\mathcal{I} [\zeta]
=\widehat{\mathcal{I}}[\zeta]/\sigma(h\omega), $
 where  $\sigma(h\omega)$ is given by $\sigma(h\omega)=\frac{  \cos(\frac{1}{2}h\omega)
}{\sinc(\frac{1}{2}h\omega)}.$ Under the conditions of Theorem
\ref{energy thm} and that $\abs{\cos(\frac{1}{2}h\omega)}\geq c h^m$
for  some $m$, we have the following relation between
$\mathcal{I}[\zeta]$ and $I(q_n,p_n)$:
\begin{equation*}
\begin{aligned}
&\mathcal{I}[\zeta](nh)=I(q_n,p_n)+\mathcal{O}(h).
\end{aligned}
\end{equation*}
Moreover, it holds that
\begin{equation*}
I(q_n,p_n)=I(q_0,p_0)+\mathcal{O}(h)
\end{equation*}
for $0\leq nh\leq h^{-N+1}.$ The constants symbolized by
$\mathcal{O}$ are independent of $n,h, \omega$,  but depend on $N,
T$ and the constants in the assumptions.

\end{theo}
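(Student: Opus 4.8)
The plan is to relate the almost invariant $\mathcal{I}[\zeta]$ of the modulation system (from Theorem~\ref{second invariant thm}) to the actual oscillatory energy $I(q_n,p_n)$ evaluated along the numerical trajectory, and then to propagate the near-conservation statement over the long time interval by a telescoping-plus-patching argument exactly as in the trigonometric-integrator analysis of \cite{hairer2006}. First I would insert the modulated Fourier expansion \eqref{MFE-ERKN} into the definition \eqref{oscillatory energy} of $I$. Since $q_n=\sum_{|k|<N}\mathrm{e}^{\mathrm{i}k\omega t}\zeta_h^k(t)+\mathcal{O}(h^N)$ and $p_n=\sum_{|k|<N}\mathrm{e}^{\mathrm{i}k\omega t}\eta_h^k(t)+\mathcal{O}(h^{N-1})$, the quantities $p_{n,2}^\intercal p_{n,2}$ and $\omega^2 q_{n,2}^\intercal q_{n,2}$ expand into sums of products $\mathrm{e}^{\mathrm{i}(k+l)\omega t}(\eta_{h,2}^k)^\intercal\eta_{h,2}^l$ and $\omega^2\mathrm{e}^{\mathrm{i}(k+l)\omega t}(\zeta_{h,2}^k)^\intercal\zeta_{h,2}^l$. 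Using the size bounds \eqref{coefficient func}, every term with $k+l\ne0$ is either $\mathcal{O}(h)$ or (after averaging, or because it multiplies a highly oscillatory factor that contributes nothing to the slowly varying invariant) negligible at the required order, so that the dominant contribution comes from the diagonal $l=-k$ terms, and among those from $k=\pm1$, because $\zeta_{h,2}^0=\mathcal{O}(h^2)$, $\eta_{h,2}^0=\mathcal{O}(h^{3/2})$ and $\zeta_{h,2}^k,\eta_{h,2}^k=\mathcal{O}(h^{k+1}),\mathcal{O}(h^k)$ for $k\ge2$. Hence $I(q_n,p_n)=(\eta_{h,2}^{-1})^\intercal\eta_{h,2}^1+\omega^2(\zeta_{h,2}^{-1})^\intercal\zeta_{h,2}^1+\mathcal{O}(h)$; substituting the relation $\eta_{h,2}^1=\mathrm{i}\omega\zeta_{h,2}^1+\mathcal{O}(h)$ from \eqref{coefficient func}/\eqref{ansatz-1} gives $(\eta_{h,2}^{-1})^\intercal\eta_{h,2}^1=\omega^2(\zeta_{h,2}^{-1})^\intercal\zeta_{h,2}^1+\mathcal{O}(h)$, so $I(q_n,p_n)=2\omega^2(\zeta_{h,2}^{-1})^\intercal\zeta_{h,2}^1+\mathcal{O}(h)$. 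Comparing with the closed-form expression for $\widehat{\mathcal{I}}[\zeta]$ in Theorem~\ref{second invariant thm} and dividing by $\sigma(h\omega)$ yields $\mathcal{I}[\zeta](nh)=I(q_n,p_n)+\mathcal{O}(h)$, which is the first claim; here the hypothesis $|\cos(\tfrac12 h\omega)|\ge ch^m$ is what guarantees $\sigma(h\omega)$ is not too small so that dividing by it only inflates the $\mathcal{O}(h^2)$ error in a controlled way (and the $\mathcal{O}(h)$ remainder from \eqref{remainder} divided by a bounded-below quantity stays $\mathcal{O}(h)$).

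For the long-time statement I would run the standard patching argument. On a single step-interval $[nh,(n+1)h]$ the modulated Fourier expansion of Theorem~\ref{energy thm} is valid with remainders $\mathcal{O}(h^N)$, $\mathcal{O}(h^{N-1})$, and Theorem~\ref{second invariant thm} gives $\widehat{\mathcal{I}}[\zeta]((n+1)h)=\widehat{\mathcal{I}}[\zeta](nh)+\mathcal{O}(h^{N+1})$ (the $\mathcal{O}(th^N)$ over one step of length $h$). The modulated Fourier expansion is constructed afresh with initial data $(q_n,p_n)$ at each such interval; the key continuity point is that the invariant built from the new initial data at time $nh$ differs from the one built at time $(n-1)h$ only by $\mathcal{O}(h^{N+1})$, because both equal $I(q_n,p_n)+\mathcal{O}(h\cdot\text{something small})$ — more precisely, reading off the construction, $\widehat{\mathcal{I}}$ depends on $(q_n,p_n)$ through the coefficient functions, and the difference of the two determinations is controlled by the defect, which is $\mathcal{O}(h^{N+1})$. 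Summing these increments over $n=0,1,\dots,\mathcal{O}(h^{-N+1})$ steps gives a total drift of $\mathcal{O}(h^{-N+1}\cdot h^{N+1})=\mathcal{O}(h^2)$ in $\widehat{\mathcal{I}}$, hence $\mathcal{O}(h)$ in $\mathcal{I}[\zeta]$ after dividing by $\sigma(h\omega)$ and using the lower bound on $\cos(\tfrac12 h\omega)$. Combining with $\mathcal{I}[\zeta](nh)=I(q_n,p_n)+\mathcal{O}(h)$ at $n=0$ and general $n$ yields $I(q_n,p_n)=I(q_0,p_0)+\mathcal{O}(h)$ for $0\le nh\le h^{-N+1}$.

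I would organise the write-up in two clearly separated parts: (i) the short-time identity $\mathcal{I}[\zeta]=I+\mathcal{O}(h)$, obtained by the expansion-and-bookkeeping described above; (ii) the patching/telescoping over $\mathcal{O}(h^{-N+1})$ steps. The main obstacle I anticipate is step (ii), specifically verifying rigorously that the almost-invariants associated to the successive modulated Fourier expansions (each re-initialised at a grid point) match up to $\mathcal{O}(h^{N+1})$ — this is the place where one must track how $\widehat{\mathcal{I}}$ depends on initial values and argue that the change incurred by re-expanding is of the same order as the one-step defect, rather than merely $\mathcal{O}(h^N)$; the trigonometric-integrator template in Chapter~XIII of \cite{hairer2006} handles exactly this and I would follow it closely, the only genuinely new ingredient being the bookkeeping with the operators $L,L_3,L_4$ and the factor $\sigma(h\omega)$, whose harmlessness rests on the extra hypothesis $|\cos(\tfrac12 h\omega)|\ge ch^m$. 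A secondary, more routine, worry is making sure that the $\mathcal{O}(h)$ (not $\mathcal{O}(h^{3/2})$ or worse) error in part (i) is sharp enough: one must use $\eta_{h,2}^1=\mathrm{i}\omega\zeta_{h,2}^1+\mathcal{O}(h)$ and $q_{0,2}=\mathcal{O}(\omega^{-1})$, $p_{0,2}=\mathcal{O}(1)$ (whence $\zeta_{h,2}^1=\mathcal{O}(h)$) so that the discarded cross-terms really are $\mathcal{O}(h)$ and not larger.
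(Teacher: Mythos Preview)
Your approach coincides with the paper's: reduce $I(q_n,p_n)$ to $2\omega^2(\zeta_{h,2}^{-1})^\intercal\zeta_{h,2}^1+\mathcal{O}(h)$ via the expansion and the relation $\eta_{h,2}^{\pm1}=\pm\mathrm{i}\omega\zeta_{h,2}^{\pm1}+\mathcal{O}(h)$, compare with Theorem~\ref{second invariant thm}, and then patch over intervals of length $T$ exactly as in Chapter~XIII of \cite{hairer2006}. The paper does precisely this, and for step~(ii) simply cites that reference.

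One justification needs tightening. You dismiss the off-diagonal terms with $k+l\neq0$ as ``negligible after averaging'' or ``because of the highly oscillatory factor'', but $I(q_n,p_n)$ is a \emph{pointwise} quantity, not a time average, so oscillatory factors are not small. In fact the $(k,l)=(1,1)$ and $(-1,-1)$ contributions to $\tfrac12\omega^2 q_{n,2}^\intercal q_{n,2}$ and to $\tfrac12 p_{n,2}^\intercal p_{n,2}$ are each individually $\mathcal{O}(1)$. They drop out not by size but by \emph{cancellation}: once $\eta_{h,2}^{\pm1}=\pm\mathrm{i}\omega\zeta_{h,2}^{\pm1}+\mathcal{O}(h)$ is substituted, the $\mathrm{e}^{\pm 2\mathrm{i}\omega t}$ terms from the $p$-part and the $q$-part have opposite signs. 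The paper encodes this cancellation in one stroke via the identity $\|v+\bar v\|^2+\|v-\bar v\|^2=4\|v\|^2$ with $v=\omega\,\mathrm{e}^{\mathrm{i}\omega t}\zeta_{h,2}^1$, after first writing $\omega q_{n,2}=v+\bar v+\mathcal{O}(h)$ and $p_{n,2}=\mathrm{i}(v-\bar v)+\mathcal{O}(h)$. Either perform your substitution \emph{before} discarding the off-diagonal terms, or invoke this identity; the averaging rationale as stated is not valid.
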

\begin{proof}
According to the definition of $\mathcal{I}$ and under the
conditions of this theorem,   one obtains
 \begin{equation}\label{HIW}\mathcal{I} [\zeta]
=2\omega^2  \big(\zeta_{h,2}^{-1}\big)^\intercal\zeta_{h,2}^{1}
+\mathcal{O}(h^2). \end{equation}
 On the other hand, it follows from
\eqref{coefficient func} that $
\eta_{h,2}^{\pm1}(t)=\pm\mathrm{i}\omega
 \zeta_{h,2}^{\pm1}(t) +\mathcal{O}(h).
$ Thus using the bounds of Theorem \ref{energy thm}, we have
\begin{equation*}
\begin{aligned}  \omega q_{n,2}& =\omega \big(\mathrm{e}^{\mathrm{i}\omega t}\zeta_{h,2}^1(t)+\mathrm{e}^{-\mathrm{i}\omega
t}\zeta_{h,2}^{-1}(t)\big)+\mathcal{O}(h),\\
 p_{n,2} &=\mathrm{i}\omega \big(\mathrm{e}^{\mathrm{i}\omega t}\zeta_{h,2}^1(t)-\mathrm{e}^{-\mathrm{i}\omega
t}\zeta_{h,2}^{-1}(t)\big)+\mathcal{O}(h).
\end{aligned} %
\end{equation*}
This implies
 \begin{equation}\label{HIE} \begin{aligned}I(q_n, p_n)&=\frac{1}{2}p_{n,2}^\intercal p_{n,2} +\frac{1}{2}\omega^2q_{n,2}^\intercal q_{n,2}\\
 &=\frac{1}{2}\norm{\mathrm{i}\omega \big(\mathrm{e}^{\mathrm{i}\omega t}\zeta_{h,2}^1(t)-\mathrm{e}^{-\mathrm{i}\omega
t}\zeta_{h,2}^{-1}(t)\big)}^2 +\frac{1}{2}\norm{\omega
\big(\mathrm{e}^{\mathrm{i}\omega
t}\zeta_{h,2}^1(t)+\mathrm{e}^{-\mathrm{i}\omega
t}\zeta_{h,2}^{-1}(t)\big)}^2 \\
 &=2\omega^2
\big(\zeta_{h,2}^{-1}\big)^\intercal\zeta_{h,2}^{1} +\mathcal{O}(h),
 \end{aligned}\end{equation}
where we have used  the fact that
$\norm{v+\bar{v}}^2+\norm{v-\bar{v}}^2=4\norm{v}^2$. A comparison
between \eqref{HIW} and \eqref{HIE}  gives the first stated relation
of this theorem. Following  the identical argument  given in Section
XIII of \cite{hairer2006},   the result of the long-time oscillatory
energy preservation  can be obtained by patching together many
intervals of length $h$.
\end{proof}

\begin{rem}From the analysis stated above for  oscillatory energy conservation, it follows that the result
of Theorem  \ref{HHthm} cannot be improved even high
 order quadratures are chosen for the AAVF method
\eqref{AAVFmethod}, which explains the numerical phenomenon shown in
Section \ref{sec:Formulation}.
\end{rem}

\section{Generalization of multi-frequency case} \label{sec: multi-frequency case}
In this section,  we are devoted  to extending the analysis to a
muti-frequency highly oscillatory Hamiltonian system  with the
following Hamiltonian function
 \begin{equation}\label{H mul}
\begin{array}
[c]{ll}%
H(q,p) &
=\dfrac{1}{2}\textstyle\sum\limits_{j=0}^{l}\Big(\norm{p_j}^{2}+\dfrac
{\lambda_j^{2}}{\epsilon^2}\norm{q_j}^{2}\Big)+U(q),
\end{array}
\end{equation}
where $q=(q_{0},q_1,\ldots,q_l),\ p=(p_{0},p_1,\ldots,p_l)$ with
$q_j,\ p_j\in \mathbb{R}^{d_j}$, $\lambda_0=0$ and $\lambda_j\geq1$
are distinct real numbers for $j\geq1$, $\epsilon$ is a small
positive parameter, and $U(q)$ is a smooth potential function. It is
well known that this system has the oscillatory energy of the $j$th
frequency as
\begin{equation*}I_j(q,p)=\dfrac{1}{2} \Big(\norm{p_j}^{2}+\dfrac
{\lambda_j^{2}}{\epsilon^2}\norm{q_j}^{2}\Big),
\end{equation*}
 and its total oscillatory energy is
$
I (q,p)=\sum\limits_{j=1}^{l}I_j(q,p). 
$

Muti-frequency  highly oscillatory Hamiltonian system often arises
  in a wide range of applications, such as in physics
and engineering,   astronomy, molecular dynamics, and in problems of
wave propagation in classical and quantum physics. There have been
many efficient numerical methods for solving this system and we
refer to
\cite{hairer2006,Hochbruck2010,Hochbruck2009,wang-2016,wang2017-ANM,wang2017-Cal,wubook2018,wu2013-book}
as well as the references contained therein.
 This muti-frequency
Hamiltonian system  can also be rewritten as the highly oscillatory
second-order system \eqref{prob} with
 $\Omega=\textmd{diag}(\omega_0
I_{d_0},\omega_1I_{d_1},\ldots, \omega_lI_{d_l})$, where
$\omega_j=\lambda_j/\epsilon$. Thus the AAVF method
\eqref{AAVFmethod} can be used to solve this system. In what
follows, we briefly discuss the long-time oscillatory energies
conservations of the AAVF method for this muti-frequency highly
oscillatory Hamiltonian system. The technique used here is the
muti-frequency modulated Fourier expansion of the AAVF method, which
can be obtained by the generalization of Sections \ref{sec:mfe of
the methods}-\ref{sec:oscillatory energy conservation} of this paper
and following the way used in \cite{Cohen05}. For brevity, we just
present the main results and omit the details of proof.

\subsection{The main results for multi-frequency case} Let
\begin{equation*}
\begin{aligned}&\lambda=(\lambda_1,\ldots,\lambda_l),\quad
k=(k_1,\ldots,k_l),\quad k\cdot
\lambda=k_1\lambda_1+\cdots+k_l\lambda_l,
\end{aligned}
\end{equation*}
and denote the resonance module by
\begin{equation}\mathcal{M}= \{k\in \mathbb{Z}^{l}:\ k\cdot
\lambda=0\}.
\label{M mul}%
\end{equation}
Following \cite{Cohen05}, we use the following notations
\begin{equation*}
\begin{aligned}
&\omega=(\omega_1,\ldots,\omega_l),\quad \langle
j\rangle=(0,\ldots,1,\ldots,0),\quad |k|=|k_1|+\cdots+|k_l|.
\end{aligned}
\end{equation*}
For  the resonance   module \eqref{M mul}, denote by $\mathcal{K}$
the set of representatives of the equivalence classes in
$\mathbb{Z}^l\backslash \mathcal{M}$ which are chosen such that for
each $k\in\mathcal{K}$ the sum $|k|$ is minimal in the equivalence
class $[k] = k +\mathcal{M},$ and that with $k\in\mathcal{K}$, also
$-k\in\mathcal{K}.$   For the positive integer $N$, we let
\begin{equation*}\mathcal{N}=\{k\in\mathcal{K}:\ |k|\leq N\},\ \ \ \ \ \mathcal{N}^*=\mathcal{N}\backslash
\{(0,\ldots,0)\}.
\end{equation*}

The multi-frequency modulated Fourier expansion  of the AAVF method
is presented in the following theorem.
\begin{theo}\label{energy thm mul}
The initial values  are supposed to  satisfy $H(q_0,p_0)\leq E$.
Assume that $ h/\epsilon \geq c_0
> 0 $
 and  the following numerical non-resonance condition  is true
\begin{equation*}
|\sin(\frac{h}{2 \epsilon}(k\cdot \lambda))| \geq c \sqrt{h}\ \
\mathrm{for} \ \ k \in \mathbb{Z}^l\backslash \mathcal{M}
   \ \   \mathrm{with} \ \  |k|\leq N
\end{equation*}
for some $N\geq2$ and $c>0$. Then   the AAVF method  admits the
following multi-frequency  modulated Fourier expansion
\begin{equation*}
\begin{aligned} &q_{n}= \zeta(t)+\sum\limits_{k\in\mathcal{N}^*} \mathrm{e}^{\mathrm{i}(k \cdot \omega) t}\zeta^k(t)+\mathcal{O}(h^{N}),\ \
p_{n}= \eta(t)+\sum\limits_{k\in\mathcal{N}^*} \mathrm{e}^{\mathrm{i}(k \cdot \omega) t}\eta^k(t)+\mathcal{O}(h^{N-1}),\\
\end{aligned}
\end{equation*}
for $0 \leq t=nh \leq T$. The coefficient functions as well as all
their derivatives are bounded by
\begin{equation*}
\begin{array}
[c]{rll}%
&\zeta_0(t)=\mathcal{O} (1  ),\qquad \qquad \ &\eta_0(t)= \mathcal{O}(1 ),  \\
&\zeta_j(t)=\mathcal{O}(h^2),\qquad \ \
&\eta_j(t))=\mathcal{O}(h),  \\
  &\zeta_j^{\pm\langle j\rangle}(t)=\mathcal{O}(h),  \
&\eta_j^{\pm\langle j\rangle}(t)=\mathcal{O}(1),\\
&\zeta_0^k(t)=\mathcal{O}\big(h^{|k|+1}\big),
&\eta_0^k(t)=\mathcal{O}\big(h ^{|k|}\big),\ k\in\mathcal{N}^*, \\
&\zeta_j^k(t)=\mathcal{O}\big(h^{|k|+1}\big),\
 &\eta_j^k(t)=\mathcal{O}\big(h^{|k|}\big),\ \ k\neq\pm\langle j\rangle,
\end{array}
\end{equation*}
for $j=1,\ldots,l$.
\end{theo}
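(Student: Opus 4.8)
The plan is to transfer the whole construction of Sections~\ref{sec:mfe of the methods}--\ref{sec:oscillatory energy conservation} from the single frequency $\omega$ to the frequency vector $\omega=(\omega_1,\ldots,\omega_l)$ with $\omega_j=\lambda_j/\epsilon$, organising the exponentials $\mathrm{e}^{\mathrm{i}(k\cdot\omega)t}$ through the resonance module $\mathcal{M}$ exactly as in \cite{Cohen05}. First I would rewrite the AAVF scheme, using its symmetry, in the two-step form $L_1(hD)q_h(t)=h^2\phi_2(V)L_2(hD)\int_{0}^{1}f(\tilde q_h(t,\tau))\,d\tau$, equivalently $L(hD)q_h(t)=h^2\phi_2(V)\int_{0}^{1}f(\tilde q_h(t,\tau))\,d\tau$, together with the one-step relation $L_3(hD)q_h(t)=\Omega^{-1}\tan(\tfrac12 h\Omega)p_h(t)$ coming from \eqref{qp connec}; these identities do not see the number of frequencies and carry over verbatim, with $\Omega=\mathrm{diag}(\omega_0 I,\ldots,\omega_l I)$. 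Then I would insert the ansatz $q_h(t)=\sum_{k\in\mathcal{N}}\mathrm{e}^{\mathrm{i}(k\cdot\omega)t}\zeta^k(t)$ — and the corresponding expansion of $\tilde q_h(t\pm\tfrac h2,\tau)$, with the operator $L_4(hD,\tau,\cdot)$ now carrying the scalar frequency $k\cdot\omega$ in place of $k\omega$ — expand $f$ in a Taylor series around $\zeta^0$, and compare coefficients of $\mathrm{e}^{\mathrm{i}(k\cdot\omega)t}$ for $k$ running over the representatives $\mathcal{K}$, taking care, as in \cite{Cohen05}, to collect all multi-indices in a fixed class $[k]=k+\mathcal{M}$ since they yield the same exponential.

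This produces the multi-frequency modulation system: for each $k$, $L(hD+\mathrm{i}(k\cdot\omega)h)\zeta^k(t)=-h^2\phi_2(V)\nabla_{-k}\mathcal{U}+(\text{higher-order modulation terms})$, and the heart of the argument — the step I expect to be the main obstacle — is to determine, from the Taylor expansions of Proposition~\ref{lhd pro} with $h\omega$ replaced by $\tfrac{h}{\epsilon}(k\cdot\lambda)$, which leading coefficient of $L(hD+\mathrm{i}(k\cdot\omega)h)$ governs $\zeta^k$ and to invert it. Three regimes appear: for $k\in\mathcal{M}\cap\mathcal{N}$ (including $k=0$) the exponential is $1$ and $L(hD)$ is elliptic of order $(hD)^2$ on the corresponding blocks, yielding second-order differential equations for $\zeta_0$ and $\zeta_j$; for $k=\pm\langle j\rangle$ the relevant entry degenerates to $-4\csc(\tfrac{h\omega_j}{2})\sin^3(\tfrac{h\omega_j}{2})+\mathcal{O}(h^2)$, so one divides by $\sin^2(\tfrac{h}{2\epsilon}\lambda_j)$ and obtains the $\zeta_j^{\pm\langle j\rangle}=\mathcal{O}(h)$ scaling; and for every remaining $k\in\mathbb{Z}^l\backslash\mathcal{M}$ with $|k|\le N$ the numerical non-resonance condition $|\sin(\tfrac{h}{2\epsilon}(k\cdot\lambda))|\ge c\sqrt h$ is exactly what keeps the leading factor invertible with a loss of at most $h^{-1/2}$ per inversion. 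Solving the system recursively as a formal series in $\sqrt h$, truncated after the $\mathcal{O}(h^{N+1})$ terms as in \cite{Hairer00,Hairer16}, then gives the stated bounds $\zeta_j^k=\mathcal{O}(h^{|k|+1})$ by induction on $|k|$; the $\eta$-bounds follow from $\eta_h^k=\Omega\tan^{-1}(\tfrac12 h\Omega)L_3(hD+\mathrm{i}(k\cdot\omega)h)\zeta_h^k$ and the Taylor expansion of $L_3$, as in \eqref{ansatz-1}.

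It remains to pin down the free constants and to estimate the remainder, both of which copy Parts~II and~IV of the proof of Theorem~\ref{energy thm}. The initial values of $\zeta_0$, $\dot\zeta_0$ and of the real and imaginary parts of $\zeta_j^{\pm\langle j\rangle}$ are fixed by requiring the expansion to hold exactly at $t=0$ and $t=h$; the hypothesis $H(q_0,p_0)\le E$ forces $\norm{q_{0,j}}=\mathcal{O}(\epsilon)=\mathcal{O}(h)$, hence $\mathrm{Re}\,\zeta_j^{\langle j\rangle}(0)=\mathcal{O}(h)$, while $q_{1,j}-\cos(h\omega_j)q_{0,j}=h\,\sinc(h\omega_j)p_{0,j}+\mathcal{O}(h^2)$ gives $2\,\mathrm{Im}\,\zeta_j^{\langle j\rangle}(0)=\omega_j^{-1}p_{0,j}+\mathcal{O}(h)$. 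Finally, defining the defects $\delta_q,\delta_p$ obtained by inserting the truncated series into \eqref{AAVFmethod}, symmetry gives $\delta_q(t+h)+\delta_q(t-h)=\mathcal{O}(h^{N+2})$ and thus $\delta_q(t)=\mathcal{O}(h^{N+2})+\mathcal{O}(th^{N+1})$, $\delta_p=\mathcal{O}(h^{N})$; writing $R_n=q_n-q_h(nh)$, $S_n=p_n-p_h(nh)$, the error recursion together with the Lipschitz bound on $f$ and a discrete Gronwall inequality yields the $\mathcal{O}(h^N)$ and $\mathcal{O}(h^{N-1})$ remainders. The only genuinely new bookkeeping compared with the single-frequency case is the grouping of multi-indices modulo $\mathcal{M}$ and the case-by-case verification that every operator inverted along the recursion falls under one of the three regimes above; that combinatorial check is where most of the work lies.
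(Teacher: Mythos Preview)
Your plan is exactly what the paper does: it gives no proof of Theorem~\ref{energy thm mul} beyond the remark that the result is obtained by generalising Sections~\ref{sec:mfe of the methods}--\ref{sec:oscillatory energy conservation} in the manner of \cite{Cohen05}, and your outline is a faithful and detailed execution of precisely that programme. One small correction to your description of the $k=\pm\langle j\rangle$ regime: on the $j$-block the zeroth-order term of $L(hD+\mathrm{i}h\omega_j)$ vanishes (cf.\ the $(2,2)$ entry in Proposition~\ref{lhd pro}) and the leading contribution is $2\sin(\tfrac{h\omega_j}{2})(\mathrm{i}hD)$, so $\zeta_j^{\pm\langle j\rangle}$ is governed by a first-order ODE and its $\mathcal{O}(h)$ size comes from the initial data you correctly determine afterwards, not from dividing an algebraic relation by $\sin^2$.
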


An almost-invariant is obtained for the functions of the
multi-frequency modulated Fourier expansion.
\begin{theo}\label{second invariant thm mul}
Under the conditions of Theorem \ref{energy thm mul}, there exists a
function $\widehat{\mathcal{I}}[\zeta]$ such that
\begin{equation*}
\widehat{\mathcal{I}}_{\mu}[\zeta](t)=\widehat{\mathcal{I}}_{\mu}[\zeta](0)+\mathcal{O}(th^{N})+\mathcal{O}(t\epsilon^{M-1})
\end{equation*}
for all $\mu\in \mathbb{R}^l$ and $0\leq t\leq T.$ Here $M=\min\{|
k|:0\neq k\in \mathcal{M}\}$. The almost-invariant satisfies
\begin{equation*}
\widehat{\mathcal{I}}_{\mu}[\zeta](t)=\widehat{\mathcal{I}}_{\mu}[\zeta](0)+\mathcal{O}(th^{N})
\end{equation*}
 for $ \mu\perp \mathcal{M}_N:=\{k\in\mathcal{M}:\
|k|\leq N\}$ and $0\leq t\leq T.$
 Moreover, $\widehat{\mathcal{I}}_{\mu}$ can be expressed in
\begin{equation*}\begin{aligned}
\widehat{\mathcal{I}}_{\mu}[\zeta] =&\sum\limits_{j=1}^l2\omega_j^2
\frac{\mu_j}{\lambda_j}\frac{\cos(\frac{1}{2}h\omega_j)
}{\sinc(\frac{1}{2}h\omega_j)}\big(\zeta_j^{-\langle
j\rangle}\big)^\intercal\zeta_j^{\langle j\rangle} +\mathcal{O}(h).
\end{aligned}
\end{equation*}
\end{theo}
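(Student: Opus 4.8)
The plan is to mirror the single-frequency argument of Theorem~\ref{second invariant thm}, now starting from the multi-frequency modulated Fourier expansion of Theorem~\ref{energy thm mul} and carrying the bookkeeping of the resonance module $\mathcal{M}$ along the lines of \cite{Cohen05}. First I would record, from the construction underlying Theorem~\ref{energy thm mul}, the modulation system in the compact form
$$L\big(hD+\mathrm{i}(k\cdot\omega)h\big)q^k_{h}(t)=-h^2\phi_2(V)\,\nabla_{-k}\mathcal{U}\big(\tilde{q}(t,\tau)\big)+\mathcal{O}(h^{N+2}),$$
where, exactly as in \eqref{newuu}, $\mathcal{U}$ collects $\int_0^1 U(\tilde{q}_h^0)\,d\tau$ together with all Taylor contributions $\frac{1}{m!}\int_0^1 U^{(m)}(\tilde{q}_h^0)(\tilde{q}_h)^{\alpha}\,d\tau$ whose multi-index sum $s(\alpha)=\sum_i\alpha_i\in\mathbb{Z}^l$ vanishes. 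The structural point is that $\mathcal{U}$ is then invariant under the $\mathbb{R}^l$-action $\tilde{q}^k\mapsto\mathrm{e}^{\mathrm{i}(k\cdot c)}\tilde{q}^k$, since a product $\prod_i\tilde{q}^{\alpha_i}$ acquires the harmless factor $\mathrm{e}^{\mathrm{i}(s(\alpha)\cdot c)}=1$. Hence for every fixed direction $\mu\in\mathbb{R}^l$ the function $\theta\mapsto\mathcal{U}\big(\tilde{q}_\mu(\theta,t,\tfrac{1}{2})\big)$, with components $\mathrm{e}^{\mathrm{i}\theta(k\cdot\mu)}\tilde{q}_h^k(t,\tfrac{1}{2})$, is constant in $\theta$.

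Differentiating this invariance at $\theta=0$ gives $\sum_{k}\mathrm{i}(k\cdot\mu)\,(\tilde{q}_h^{k}(t,\tfrac{1}{2}))^\intercal\nabla_k\mathcal{U}(\tilde{q}(t,\tfrac{1}{2}))=0$; I would then relabel $k\mapsto-k$, insert the modulation equations above together with the relation $\tilde{q}_h^k(t,\tfrac{1}{2})=\mathrm{e}^{\mathrm{i}(k\cdot\omega)t}L_4(hD,\tfrac{1}{2},k)\zeta_h^k(t)$, and arrive at
$$\sum_{k}\mathrm{i}(k\cdot\mu)\big(L_4(hD,\tfrac{1}{2},-k)\zeta_h^{-k}(t)\big)^\intercal\frac{1}{-h^2}\phi_2^{-1}(V)\,L\big(hD+\mathrm{i}(k\cdot\omega)h\big)\zeta_h^k(t)=\mathcal{O}(h^{N})+\mathcal{O}(\epsilon^{M-1}).$$
The extra $\mathcal{O}(\epsilon^{M-1})$ records the non-oscillatory defect produced by the resonant multi-indices $0\neq k\in\mathcal{M}$, which the expansion of Theorem~\ref{energy thm mul} does not resolve; its size is governed by $M=\min\{|k|:0\neq k\in\mathcal{M}\}$. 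Using the Taylor expansions of $L$ and $L_4$ (the multi-frequency counterpart of Proposition~\ref{lhd pro}), i.e.\ $L_4(hD,\tfrac{1}{2},-k)\zeta_h^{-k}=(\cdot)\bar{\zeta}_h^k+\mathrm{i}h(\cdot)\dot{\bar{\zeta}}_h^k+\cdots$ and $L(hD+\mathrm{i}(k\cdot\omega)h)\zeta_h^k=(\cdot)\zeta_h^k+\mathrm{i}h(\cdot)\dot{\zeta}_h^k+\cdots$, together with the ``total-derivative'' identities on p.~508 of \cite{hairer2006}, I would recognise the left-hand side as $\frac{d}{dt}\widehat{\mathcal{I}}_\mu[\zeta](t)$ up to the stated error, and integrate over $[0,t]$. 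When $\mu\perp\mathcal{M}_N$ the resonant contributions drop out of this total derivative, and the error sharpens to $\mathcal{O}(th^{N})$.

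The explicit form is then read off from the size estimates of Theorem~\ref{energy thm mul}: after multiplication by $\mathrm{i}(k\cdot\mu)/h^2$, the only pairings $(\zeta_h^{-k})^\intercal\zeta_h^{k}$ that survive in the limit are the single-frequency ones $k=\pm\langle j\rangle$, for which $\zeta_j^{\pm\langle j\rangle}=\mathcal{O}(h)$; evaluating the leading Taylor coefficients of $L$ and $L_4$ at $k\cdot\omega=\pm\omega_j$ and simplifying $\frac{2h\omega_j\sin(\tfrac{1}{2}h\omega_j)\cos(\tfrac{1}{2}h\omega_j)}{h^2\phi_2(h\omega_j)}=2\omega_j^2\frac{\cos(\tfrac{1}{2}h\omega_j)}{\sinc(\tfrac{1}{2}h\omega_j)}$ reproduces the claimed expression with weights $\mu_j/\lambda_j$, all lower-order terms being $\mathcal{O}(h)$. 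The part I expect to be the main obstacle is precisely the resonance bookkeeping: identifying which index combinations $k\in\mathbb{Z}^l$ feed into the defect, quantifying their aggregate effect as $\mathcal{O}(\epsilon^{M-1})$, and verifying that for $\mu\perp\mathcal{M}_N$ every such contribution cancels in the total derivative. Everything else is a faithful, if lengthy, transcription of the single-frequency computation in the proof of Theorem~\ref{second invariant thm} and of the corresponding analysis in \cite{Cohen05}.
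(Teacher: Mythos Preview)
The paper does not actually prove this theorem: Section~\ref{sec: multi-frequency case} states that the multi-frequency results are obtained ``by the generalization of Sections \ref{sec:mfe of the methods}--\ref{sec:oscillatory energy conservation} of this paper and following the way used in \cite{Cohen05}. For brevity, we just present the main results and omit the details of proof.'' Your proposal is precisely that generalisation, and the outline is sound---the $\mathbb{R}^l$-invariance of $\mathcal{U}$, the Noether-type differentiation in the direction $\mu$, recognition of the result as a total derivative via the identities on p.~508 of \cite{hairer2006}, the resonance defect of size $\mathcal{O}(\epsilon^{M-1})$, and its removal through the factor $k\cdot\mu=0$ when $\mu\perp\mathcal{M}_N$---so it matches the approach the paper indicates.
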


Consider  the following  modified oscillatory energies
\begin{equation*}
\begin{aligned}
& I_{\mu}^{*}(q,p)=  \sum\limits_{j=1}^l
\sigma(\xi_j)\frac{\mu_j}{\lambda_j} I_j(q,p),
\end{aligned}
\end{equation*}
 where $\sigma$ is defined as
$
 \sigma(\xi_j):=\frac{\cos(\frac{1}{2}\xi_j)
}{\sinc(\frac{1}{2}\xi_j)}.$
 We then  obtain the result about the long-time modified oscillatory energies conservations of the AAVF method for multi-frequency
 highly oscillatory systems.
\begin{theo}\label{HHthm-new nul} Under the
conditions of Theorem  \ref{energy thm mul}, we have
\begin{equation*}
\begin{aligned}
&\widehat{\mathcal{I}}_{\mu}[\vec{\zeta},\vec{\eta}](nh)=
I_{\mu}^{*}(q_n,p_n)+\mathcal{O}(h).
\end{aligned}
\end{equation*}
Moreover,   it   holds that
  \begin{equation*}
\begin{aligned}
I_{\mu}^{*}(q^n,p^n)&=I_{\mu}^{*}(q^0,p^0)+\mathcal{O}(h)\\
\end{aligned}
\end{equation*}
for $0\leq nh\leq h^{-N+1}$,   $\mu\in \mathbb{R}^l$ and $ \mu\perp
\mathcal{M}_N$. The constants symbolised by $\mathcal{O}$ are
independent of $n, h, \Omega$, but depend on $N, T$ and the
constants in the assumptions.
\end{theo}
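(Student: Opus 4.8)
The plan is to follow the single-frequency proof of Theorem~\ref{HHthm} mode by mode so as to identify $\widehat{\mathcal{I}}_{\mu}[\zeta]$ pointwise with $I_{\mu}^{*}$, and then to patch short time-intervals together exactly as in Section~XIII.3 of \cite{hairer2006} (see also \cite{Cohen05}).

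First I would establish the pointwise relation $\widehat{\mathcal{I}}_{\mu}[\vec{\zeta},\vec{\eta}](nh)=I_{\mu}^{*}(q_n,p_n)+\mathcal{O}(h)$. Since \eqref{qp connec} is a purely algebraic, frequency-independent identity, it carries over to the diagonal block form $\Omega=\mathrm{diag}(\omega_jI_{d_j})$ and produces, just as in \eqref{eta modula sys}, the relation $\eta_h^k=\Omega\tan^{-1}(\frac12 h\Omega)L_3(hD+\mathrm{i}(k\cdot\omega)h)\zeta_h^k$ for every multi-index $k$. Evaluating the $j$th block at $k=\pm\langle j\rangle$ and inserting the Taylor expansion of $L_3$ from Proposition~\ref{lhd pro} gives $\eta_j^{\pm\langle j\rangle}=\pm\mathrm{i}\omega_j\,\zeta_j^{\pm\langle j\rangle}+\mathcal{O}(h)$, the multi-frequency analogue of the $\eta^1_{h,2}$-relation in \eqref{coefficient func}. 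By Theorem~\ref{energy thm mul} the remaining modes of the $j$th block (those with $k=0$ or $k\neq\pm\langle j\rangle$) are of higher order, hence
\begin{equation*}
\begin{aligned}
\omega_j q_{n,j}&=\omega_j\big(\mathrm{e}^{\mathrm{i}\omega_j t}\zeta_j^{\langle j\rangle}(t)+\mathrm{e}^{-\mathrm{i}\omega_j t}\zeta_j^{-\langle j\rangle}(t)\big)+\mathcal{O}(h),\\
p_{n,j}&=\mathrm{i}\omega_j\big(\mathrm{e}^{\mathrm{i}\omega_j t}\zeta_j^{\langle j\rangle}(t)-\mathrm{e}^{-\mathrm{i}\omega_j t}\zeta_j^{-\langle j\rangle}(t)\big)+\mathcal{O}(h),
\end{aligned}
\end{equation*}
and applying $\norm{v+\bar v}^2+\norm{v-\bar v}^2=4\norm{v}^2$ exactly as in \eqref{HIE} yields $I_j(q_n,p_n)=2\omega_j^2(\zeta_j^{-\langle j\rangle})^\intercal\zeta_j^{\langle j\rangle}+\mathcal{O}(h)$ for $j=1,\ldots,l$. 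Multiplying by $\sigma(h\omega_j)\mu_j/\lambda_j$ and summing over $j$ reproduces exactly the explicit form of $\widehat{\mathcal{I}}_{\mu}[\zeta]$ given in Theorem~\ref{second invariant thm mul}, which is the first assertion. Observe that, in contrast with Theorem~\ref{HHthm}, no extra lower bound on $\cos(\frac12 h\omega_j)$ is required here: the weight $\sigma$ now occurs on both sides instead of being divided out, and $I_{\mu}^{*}$ is built precisely so that this matching is clean.

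For the long-time statement I would run the telescoping/patching argument of Chapter~XIII of \cite{hairer2006}. Split $[0,h^{-N+1}]$ into $K=\mathcal{O}(h^{-N+1})$ subintervals $[t_i,t_{i+1}]$ of length $\mathcal{O}(1)$ and, on each, build the multi-frequency modulated Fourier expansion of Theorem~\ref{energy thm mul} with modulation functions $\zeta^{(i)}$ started from the numerical data at $t_i$; the numerical non-resonance condition holds on every subinterval with the same constants. Theorem~\ref{second invariant thm mul} gives $\widehat{\mathcal{I}}_{\mu}[\zeta^{(i)}](t_{i+1})-\widehat{\mathcal{I}}_{\mu}[\zeta^{(i)}](t_i)=\mathcal{O}(h^{N})$, and here the hypothesis $\mu\perp\mathcal{M}_N$ is exactly what removes the otherwise-present $\mathcal{O}(t\epsilon^{M-1})$ resonant drift. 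Two adjacent expansions reproduce the same numerical values near $t_{i+1}$ up to $\mathcal{O}(h^{N})$, so $\widehat{\mathcal{I}}_{\mu}[\zeta^{(i+1)}](t_{i+1})-\widehat{\mathcal{I}}_{\mu}[\zeta^{(i)}](t_{i+1})=\mathcal{O}(h^{N})$; summing the $K$ terms of size $\mathcal{O}(h^{N})$ yields a total variation of $\widehat{\mathcal{I}}_{\mu}[\zeta^{(\cdot)}]$ of order $\mathcal{O}(h)$. Combining this with the pointwise relation of the first part, applied at $t=0$ and at $t=nh$, gives $I_{\mu}^{*}(q_n,p_n)=I_{\mu}^{*}(q_0,p_0)+\mathcal{O}(h)$ for $0\le nh\le h^{-N+1}$.

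The bookkeeping above is routine; the real work is in making the patching genuinely rigorous in the multi-frequency setting. One has to verify that the bounds of Theorem~\ref{energy thm mul} and the almost-invariance of Theorem~\ref{second invariant thm mul} hold on each subinterval with constants uniform in $i$, and that the almost-invariants attached to consecutive expansions differ by only $\mathcal{O}(h^{N})$ at the matching times — this requires control of how the modulation functions depend on their numerical initial data, which is the delicate part carried out for trigonometric integrators in \cite{hairer2006} and, in the multi-frequency case, in \cite{Cohen05}, and which we invoke. A secondary point is that the representative set $\mathcal{K}$ and the truncation index $N$ must be fixed once and for all, and that the constants hidden in $\sigma(h\omega_j)$ and in the non-resonance condition be tracked, so that the final $\mathcal{O}$-constants depend only on $N$, $T$ and the data in the assumptions.
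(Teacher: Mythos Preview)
Your proposal is correct and follows precisely the approach the paper indicates: the paper omits the proof of Theorem~\ref{HHthm-new nul} entirely, stating only that it is obtained ``by the generalization of Sections~\ref{sec:mfe of the methods}--\ref{sec:oscillatory energy conservation} of this paper and following the way used in \cite{Cohen05},'' and your argument is exactly this generalisation---the mode-by-mode adaptation of the proof of Theorem~\ref{HHthm} combined with the standard patching argument from \cite{hairer2006,Cohen05}. Your observation that the modified energy $I_{\mu}^{*}$ absorbs the weight $\sigma(h\omega_j)$, so that no lower bound on $\cos(\tfrac12 h\omega_j)$ is needed here, is a useful clarification of why Theorem~\ref{HHthm-new nul} carries one hypothesis fewer than Theorem~\ref{HHthm}.
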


\subsection{Numerical experiments}
In order to  illustrate the numerical conservation of
 the modified oscillatory energies for the AAVF method, we consider a Hamiltonian \eqref{H mul} with
$l=3$ and $\lambda=(1,\sqrt{2},2)$  (see \cite{Cohen05}). It is
shown in  \cite{Cohen05} that there is the $1:2$ resonance between
$\lambda_1$ and $\lambda_3$: $\mathcal{M}= \{(-2k_3,0,k_3):\ k_3\in
\mathbb{Z}\}.$ For this problem,  the dimension of $q_1 = (q_{11},
q_{12})$ is assumed to be 2 and all the other $q_j$ are assumed to
be 1.  We consider   $ \epsilon^{-1}= \omega = 70$, the potential
$U(q) = (0.001q_0 + q_{11} + q_{22} + q_2 + q_3)^ 4,$ and
$$q(0) = (1,0.3\epsilon,0.8\epsilon,-1.1\epsilon,0.7\epsilon),\ \ p(0) = (-0.75,0.6,0.7,-0.9,0.8)$$
as initial values. For $\lambda=(1,\sqrt{2},2),$ it is chosen that
$\mu=(1,0,2)$ and $\mu=(0,\sqrt{2},0)$
 for $I_{\mu}$ and  the corresponding results  are
$I_{\mu}=I_1+I_3$ and $I_{\mu}=I_2.$ We   integrate this problem on
the interval $[0,10000]$ with $h=0.1, 0,01$.  The modified
oscillatory energies
  conservations  are shown in Figs.
\ref{p3-0}-\ref{p4}.

 \begin{figure}[ptb]
\centering
\includegraphics[width=6cm,height=3cm]{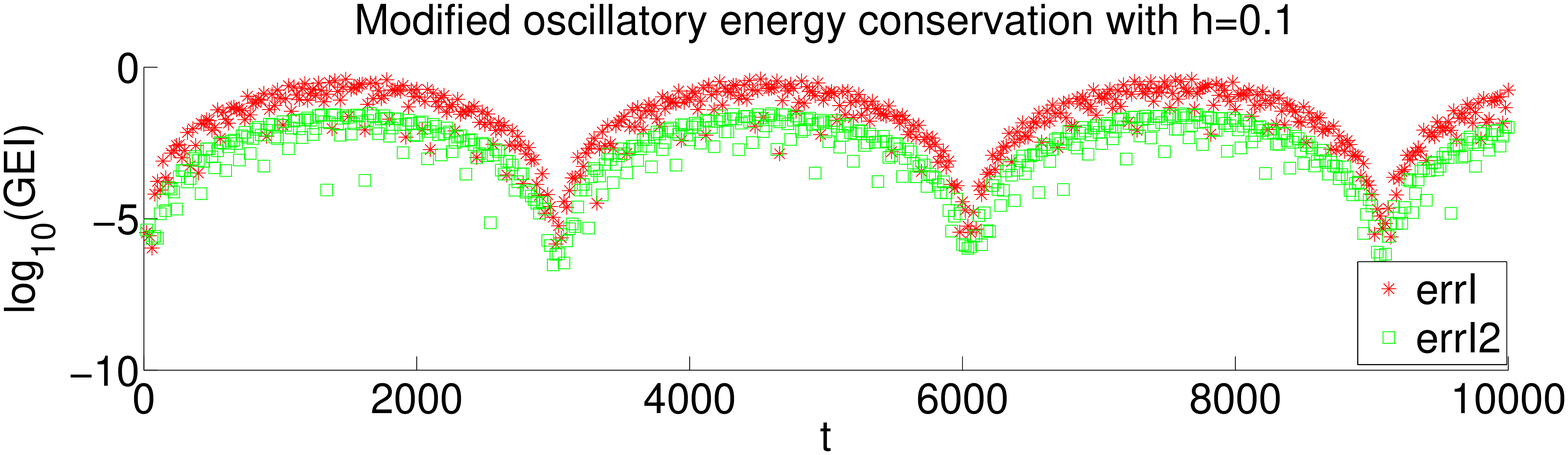}
\includegraphics[width=6cm,height=3cm]{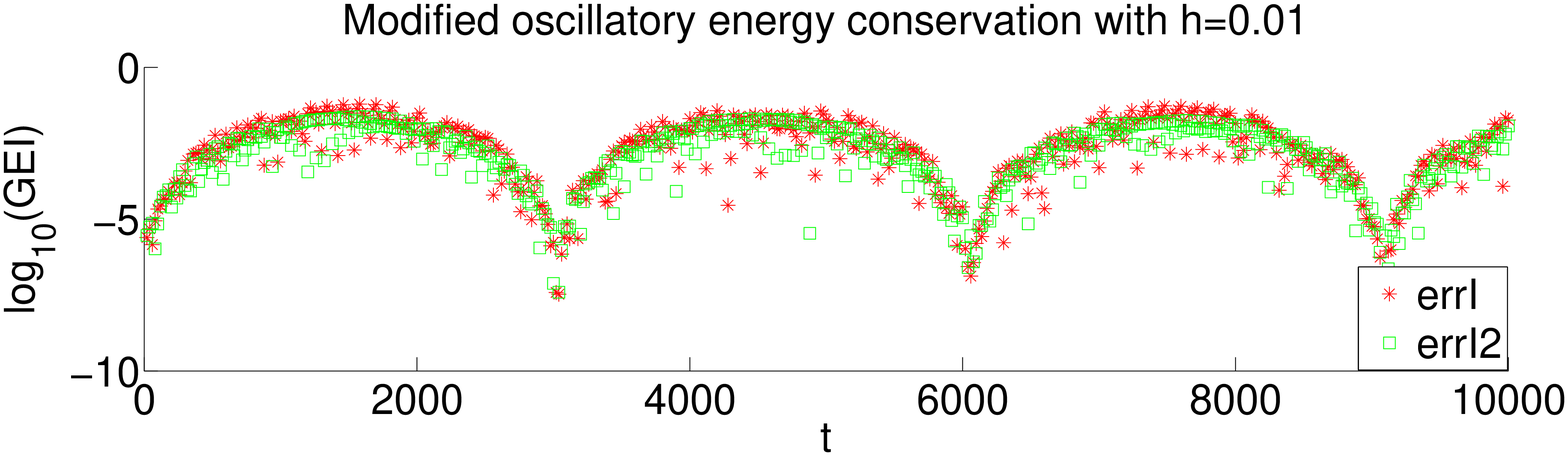}
\caption{AAVF 1: the logarithm of the modified oscillatory energy
errors against $t$.} \label{p3-0}
\end{figure}

 \begin{figure}[ptb]
\centering
\includegraphics[width=6cm,height=3cm]{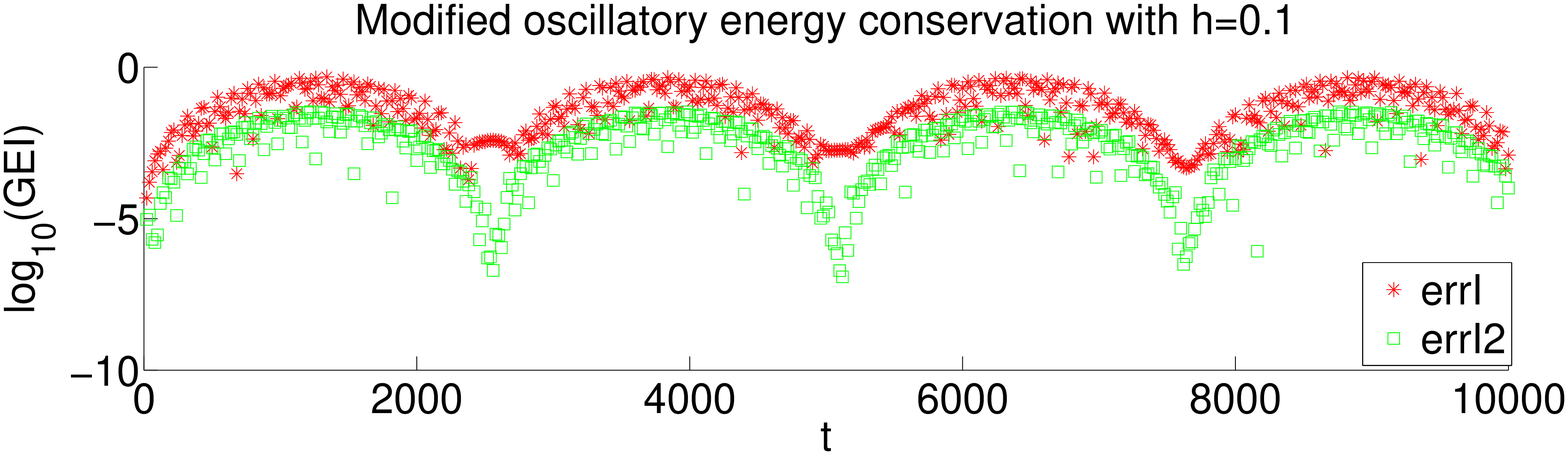}
\includegraphics[width=6cm,height=3cm]{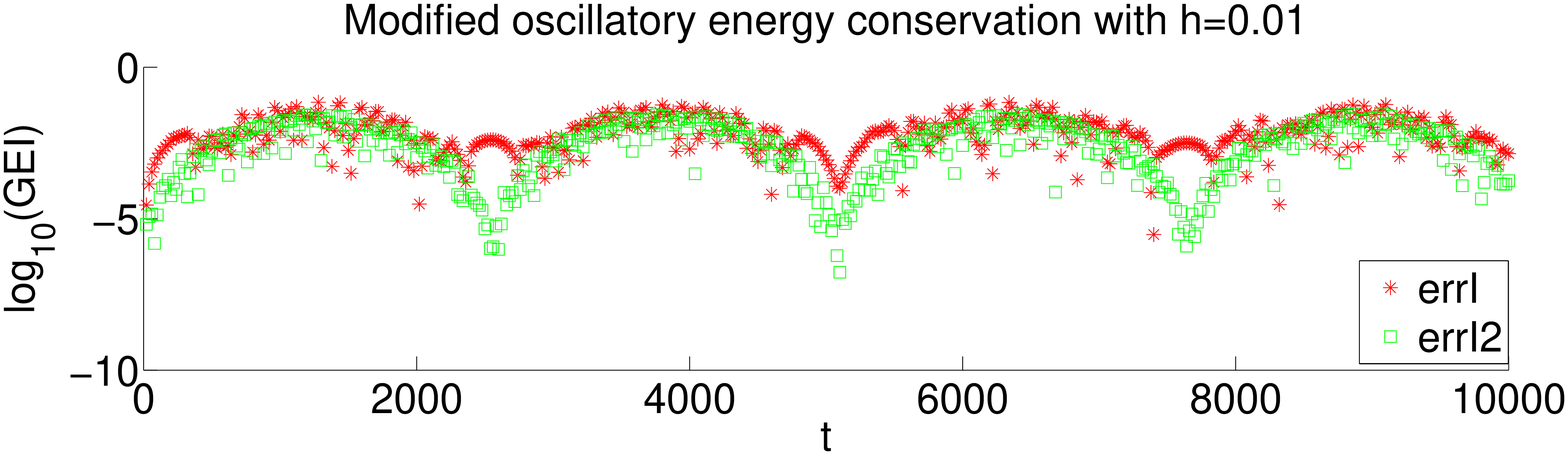}
\caption{AAVF 2: the logarithm of the modified oscillatory energy
errors against $t$.} \label{p3}
\end{figure}
 \begin{figure}[ptb]
\centering
\includegraphics[width=6cm,height=3cm]{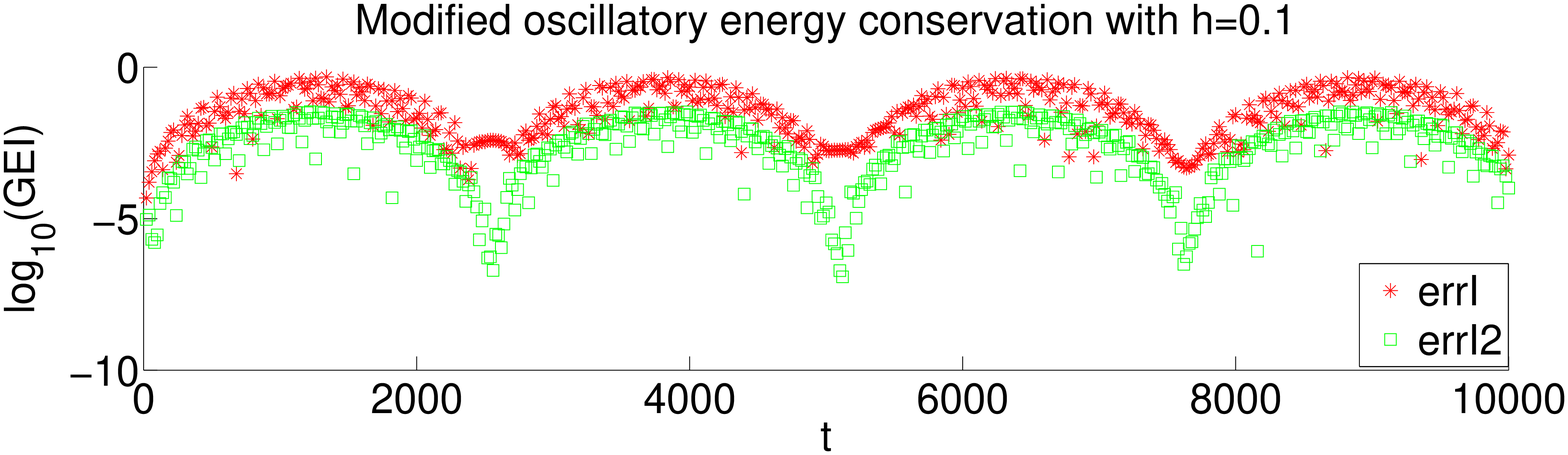}
\includegraphics[width=6cm,height=3cm]{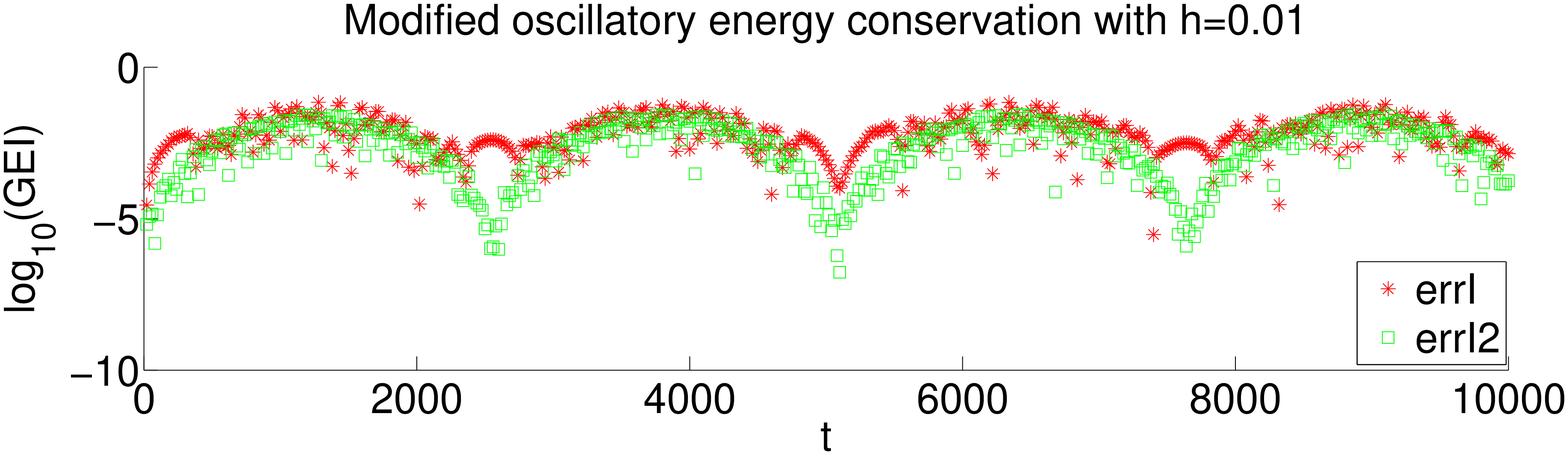}
\caption{AAVF 3: the logarithm of the modified oscillatory energy
errors against $t$.} \label{p4}
\end{figure}
\section{Conclusions} \label{sec:conclusions}
In this paper,  we   presented a long-term  analysis of the adapted
average vector field (AAVF) method for highly oscillatory
Hamiltonian systems.  This AAVF method can exactly preserve the
total energy of the underlying systems, but the main theme of this
paper is to study its oscillatory energy and the corresponding
numerical conservation. We analysed the long-term behaviour in the
oscillatory energy conservation  by developing modulated Fourier
expansions for the method. A further extension of the analysis to
multi-frequency case has also been discussed.

Last but not least, it is noted that some trigonometric
energy-preserving methods have been well developed for solving wave
equations and see
\cite{JMAA(2015)_Liu_Wu,JCP(2018)_Liu_Iserles_Wu,kai-2017,IMA2018}
for example. For Hamiltonian wave equations, the study of long-time
conservation of momentum and actions along these EP methods is
overarching importance, and this  will be discussed in our another
work.

\section*{Acknowledgements}

The authors are grateful to Professor Christian Lubich for his
helpful comments and discussions on the topic of modulated Fourier
expansions. We also thank him for drawing our attention to the
long-term  analysis of energy-preserving methods.


\end{document}